\documentclass[a4paper]{article}

\usepackage{microtype}

\usepackage[utf8]{inputenc}

\usepackage[margin=3.5cm]{geometry}
\usepackage{authblk}
\usepackage{amsmath, amsthm, amsfonts, amssymb}
\usepackage{appendix}
\usepackage{hyperref}
\usepackage{mathtools}
\usepackage{lipsum}
\usepackage{enumitem}
\usepackage{tikz}
\usepackage{tikz-cd}
\usetikzlibrary{arrows, calc, cd, fit}
\usepackage[bb=dsserif]{mathalpha}
\usepackage{alltt}
\usepackage{algpseudocode}
\usepackage{algorithm}
\usepackage{nicematrix}
\usepackage{tabularx}
\usepackage{subcaption}



\newcommand{\blank}{{-}}
\newcommand{\Hom}{\mathrm{Hom}}

\newcommand{\id}{\mathrm{id}}
\newcommand{\Dnk}[2]{\Delta_{\hspace{1pt}#1}^{\hspace{-2pt}(#2)}}
\DeclareMathOperator{\im}{im}

\DeclareMathOperator{\supp}{supp }
\DeclareMathOperator{\coker}{coker}
\DeclareMathOperator{\Proj}{proj}
\DeclareMathOperator{\Incl}{incl}
\DeclareMathOperator{\rank}{rank}
\DeclareMathOperator{\Span}{span}
\DeclareMathOperator{\St}{St}

\DeclareMathOperator{\stcplx}{SC}

\newcommand\quotient[2]{
        \mathchoice
            {
                \text{\raise1ex\hbox{$#1$}\Big/\lower1ex\hbox{$#2$}}%
            }
            {
                #1\,/\,#2
            }
            {
                #1\,/\,#2
            }
            {
                #1\,/\,#2
            }
    }
    
\newcommand{\setdef}[2]{
		\left\{ #1 \,\middle|\,#2 \right\}
}

\makeatletter
\setbox0\hbox{$\xdef\scriptratio{\strip@pt\dimexpr
    \numexpr(\sf@size*65536)/\f@size sp}$}
\newcommand{\scriptveryshortarrow}[1][4pt]{{%
    \vcenter{\hbox{\rule[\scriptratio\dimexpr-.2pt\relax]
              {\scriptratio\dimexpr#1\relax}{\scriptratio\dimexpr.4pt\relax}}}%
  \mkern-4mu\hbox{\let\f@size\sf@size\usefont{U}{lasy}{m}{n}\symbol{41}}}}
\newcommand{\smap}[3]{{#1}(#2\leq #3)}

\algnewcommand\algorithmicforeach{\textbf{for each}}
\algdef{S}[FOR]{ForEach}[1]{\algorithmicforeach\ #1\ \algorithmicdo} 



\theoremstyle{definition}

\newtheorem{theorem}{Theorem}
\newtheorem{proposition}[theorem]{Proposition}

\newtheorem{lemma}[theorem]{Lemma}
\newtheorem{corollary}[theorem]{Corollary}

\newtheorem*{remark}{Remark}
\newtheorem{definition}[theorem]{Definition}
\newtheorem{example}[theorem]{Example}

\title{Computing Minimal Injective Resolutions of Sheaves on Finite Posets }


\author{Adam Brown and Ond\v{r}ej Draganov}
\date{}

\begin{document}
\maketitle
\begin{abstract}
     In this paper we introduce two new methods for constructing injective resolutions of sheaves of finite-dimensional vector spaces on finite posets. Our main result is the existence and uniqueness of a minimal injective resolution of a given sheaf and an algorithm for its construction. For the constant sheaf on a simplicial complex, we give a topological interpretation of the multiplicities of indecomposable injective sheaves in the minimal injective resolution, and give asymptotically tight bounds on the complexity of computing the minimal injective resolution with our algorithm.  
\end{abstract}

\section{Introduction}

A common strategy for analyzing a complicated mathematical structure is to approximate or represent the given structure with a collection of simpler, or at least more familiar, objects; the goal is to reframe questions concerning the complex structure as questions about the building blocks which represent it. Illustrations of this strategy permeate mathematics. The focus of this paper is a particular instance of this phenomena: \emph{injective resolutions} of \emph{sheaves}.

Sheaves use algebra to model relationships between local and global properties of a topological space. When the topological space is a poset (with the Alexandrov topology), a sheaf, $F$, is defined by associating a finite-dimensional vector space, $F(\sigma)$, to each element, $\sigma$, and a linear map, $F(\sigma\le\tau):F(\sigma)\rightarrow F(\tau)$, to each relation $\sigma\le \tau$ (subject to commutativity requirements, see Definition \ref{def:sheaf}). The utility of this definition is also its foil: the high level of generality encompasses many pathologies. For example, each persistence module (including the multi-parameter ones) can be viewed as a sheaf on a poset, and all of the difficulties in analyzing multi-parameter persistence modules arise when studying sheaves. An \emph{injective resolution} represents a given sheaf (much like a barcode or persistence diagram represents a 1-dimensional persistence module) with an exact sequence of \emph{injective sheaves}, which admit many desirable properties. Efficient algorithms for computing injective resolutions are a first step toward applying well-established and powerful theoretical results from derived sheaf theory to persistent homology. In this paper we aim to present this theory in an explicit and computationally feasible framework. 

\paragraph*{Injective Resolutions.} An \emph{injective sheaf}  (Definition \ref{def:injective_sheaf_2}), $I$, is a sheaf such that each morphism of sheaves $G\rightarrow I$ (Definition \ref{def:natural-transformation}) can be extended to a morphism $F\rightarrow I$, whenever $G\subset F$. Injective sheaves admit many beneficial features which general sheaves lack (see, for example, Lemma \ref{lem:coker-injective}, Proposition \ref{prop:decomposition_of_injective_sheaves}, and Lemma \ref{lem:maps_between_injective_sheaves}). From the perspective of homological algebra, injective sheaves are the `basic' objects with which we aim to represent a general sheaf. However, standard operations in linear algebra are insufficient for such a representation. For example, if a sheaf is not already injective, then it does not decompose into a direct sum of injective sheaves. Instead, we will represent a given sheaf~$F$ with an injective resolution (Definition \ref{def:injective_resolution}): an exact sequence, $0\rightarrow F\rightarrow I^0\rightarrow I^1\rightarrow I^2\rightarrow  \cdots$,
such that each $I^j$ is an injective sheaf. 

Injective resolutions, a fundamental ingredient for homological algebra, are used to study sheaves from the ‘derived’ perspective, i.e.\ as objects in a derived category. 
These derived categories unify and generalize several forms of cohomology, such as simplicial cohomology, de Rham cohomology, intersection cohomology, etc. For example, simplicial cohomology (and level-set persistent cohomology) can easily be computed from an injective resolution of the constant sheaf (see Example \ref{ex:constant-sheaf} and Section \ref{sec:derived-functors}), illustrating that even an injective resolution of the constant sheaf contains subtle topological information. Several recent works point to the potential benefits of applying derived sheaf theory to the study of persistent homology~ \cite{BerkoukPetit2021b,BerkoukGinotOudot,BerkoukPetit2021a,Curry2014,Schapira,KS2021}. We approach this subject from a computational perspective in order to help bridge gaps between applied topology and derived sheaf theory. With this goal in mind, we aim to limit the mathematical prerequisites of our approach whenever possible (a choice which often comes at the cost of brevity).


\paragraph*{Main Results.}
In this paper we develop methods for computing injective resolutions of sheaves of finite-dimensional vector spaces on finite posets. Our main contributions are:
\begin{enumerate}
    \item We establish the existence and uniqueness of a minimal injective resolution of a given sheaf (Theorem \ref{thm:main-result} and Corollary \ref{cor:existence}).
    \item For the constant sheaf on a simplicial complex, we give a topological interpretation of the multiplicity of an indecomposable injective sheaf in the minimal injective resolution, in terms of compactly supported cohomology (Theorem \ref{thm:multiplicities}).
    \item We introduce a non-inductive definition of a (non-minimal) injective resolution of a given sheaf (Section~\ref{sec:algo_non-inductive}). 
    \item We introduce an inductive algorithm for computing the minimal injective resolution (Algorithm \ref{algo:injective_resolution_tail}), and prove correctness of the algorithm (Section~\ref{sec:algo_minimal}).
    \item For the constant sheaf on a simplicial complex, we give an asymptotically tight bound on the complexity of Algorithm \ref{algo:injective_resolution_tail} (Proposition \ref{prop:upper_bound_on_SC} and Corollary \ref{cor:complexity}). 
\end{enumerate}
 As an application of our results, in Section \ref{sec:derived-functors} we explicitly describe the right derived pushforwards, $R^\bullet f_\ast $ and $R^\bullet f_! $, and show that traditional (level-set) multi-parameter persistence modules can be recovered from $R^\bullet f_\ast k_\Sigma$.

\paragraph{Comparison to Prior Work.} Derived sheaf theory is a rich subject which has been thoroughly developed over several decades. There are multiple textbooks on sheaf theory \cite{Bredon1997,Iversen,KashiwaraSchapira1994}, and many publications which study sheaves on finite topological spaces. In \cite{Shepard1985}, Shepard relates sheaves on finite cell complexes (viewed as posets) to the classical setting of constructible sheaves on stratified topological spaces. In \cite{Ladkani2008}, Ladkani studies the homological properties of finite posets and introduces combinatorial criteria guaranteeing derived equivalences between categories of sheaves. In \cite{Curry2014}, Curry establishes the connection between sheaf theory and persistent homology. More recently, several publications expand on the work initiated by Curry on applications of derived sheaf theory to persistent homology~\cite{BerkoukPetit2021b,BerkoukGinotOudot,BerkoukPetit2021a,Schapira,KS2021}. 

Motivated by the above work, and by the potential to develop new techniques for computational topology, we aim to establish results on computational aspects of derived sheaf theory for finite topological spaces. The contributions of this paper are the first of our knowledge to describe and analyze efficient algorithms for computing injective resolutions of sheaves on finite posets. While this is a necessary first step toward utilizing the machinery of derived categories in computational topology, there is still more work to be done. In Section~\ref{sec:discussion}, we describe several future directions of research which stem from this paper. 

\begin{remark}
We should comment on a matter of perspective and terminology. Over finite posets, sheaves are closely related (and sometimes equivalent, as in the case of modules over the incidence algebra of a poset), to several other mathematical objects studied by various research communities. Specifically, a great deal of work has been done in commutative algebra on minimal projective and free resolutions of modules over various kinds of algebras. However, in the present paper we choose to focus on the perspective and terminology which most closely aligns with classical sheaf theory, in order to preserve intuition from that discipline. 
\end{remark}



\section{Background and Preliminary Results}

In this paper we study finite-dimensional vector space valued sheaves on finite posets. We begin by recalling preliminary definitions and results, drawing heavily from \cite{Curry2014,Shepard1985,Ladkani2008}. Throughout the paper we fix a field $k$. 

For $\pi,\tau$ in a poset $\Pi$, we write $\pi<_1\tau$ if $\pi\lneq\tau$, and there is no other element between $\pi$ and $\tau$.
We use some of the standard terminology from simplicial complexes for general posets: the \emph{star} of an element $\sigma$ is $\St\sigma=\setdef{\tau\in\Pi}{\sigma\leq\tau}$, the \emph{boundary} is $\textrm{bnd}(\sigma)=\setdef{\tau\in\Pi}{\tau<_1\sigma}$, and the \emph{coboundary} is $\textrm{cobnd}(\sigma)=\setdef{\tau\in\Pi}{\sigma<_1\tau}$.

\begin{definition}
\label{def:sheaf}

A sheaf $F$ on a finite poset $\Pi$ is an assignment of a finite-dimensional $k$-vector space $F(\pi)$ to each element $\pi\in\Pi$, and an assignment of a linear map
\[
F(\tau\le\gamma):F(\tau)\rightarrow F(\gamma),
\]
to each face relation $(\tau\le\gamma)\in\Pi$, such that 
\begin{enumerate}
    \item $F(\tau\le\tau)=\id_{F(\tau)}$
    \item $F(\tau\le\gamma)\circ F(\sigma\le\tau) = F(\sigma\le\gamma)$
\end{enumerate}
for each triple $\sigma\le\tau\le\gamma\in\Pi$. 
\end{definition}

\begin{example}\label{ex:constant-sheaf}
The \emph{constant sheaf}, denoted $k_\Pi$, on a poset $\Pi$, assigns to each element $\pi\in\Pi$ the one-dimensional vector space, $k$, and to each relation, $(\pi\le \tau)\in\Pi$, the identity map $\id_k$. 
\end{example}

\begin{definition}
\label{def:natural-transformation}
A natural transformation, $\eta:F\rightarrow G$, between two sheaves on $\Pi$, is a collection of linear maps 
$
\eta(\pi):F(\pi)\rightarrow G(\pi)
$
for each $\pi\in\Pi$, such that 
\[
G(\tau\le\gamma)\circ \eta(\tau) = \eta(\gamma)\circ F(\tau\le\gamma),
\]
for each $(\tau\le\gamma)\in \Pi$. For a natural transformation $\eta:F\rightarrow G$, the kernel, cokernel, image, and coimage are taken point-wise, defining sheaves on $\Pi$: 
\[(\ker\eta)(\pi):=\ker (\eta(\pi)),\qquad (\ker\eta)(\pi\le\tau):=F(\pi\le\tau)\vert_{\ker\eta(\pi)}.\]
Moreover, if $\ker\eta(\pi)=0$ for each $\pi\in\Pi$, we say that $\eta$ is \emph{injective}. We write $G/F \coloneqq\coker\eta$ if $\eta$ is an injection clear from the context. 

\end{definition}

\begin{definition}
\label{def:injective_sheaf_2}
    A sheaf $I$ is called injective if for each injective natural transformation $A\hookrightarrow B$, any given natural transformation $A\rightarrow I$ can be extended to $B\rightarrow I$:
    \begin{center}
    \begin{tikzcd}
    0 \ar{r} & A \ar{r} \ar{dr}[swap]{\forall} & B \ar[dashed]{d}{\exists} \\
    & & I
    \end{tikzcd}
    \end{center}
\end{definition}

This condition is always satisfied for sheaves over a single point space (i.e., the assignment of a point to a single vector space): we can extend any linear map on a subspace to the whole space by mapping a complement space to 0. This property does not hold in general for sheaves. The following two examples show sheaves that are not injective.

\begin{example}\label{ex:non-injective_sheaf}
We define a sheaf $F$ which does not satisfy the condition of Definition~\ref{def:injective_sheaf_2}. Let us fix a vector space $W$, and define two sheaves, $F$ and $G$, on a poset with two elements $\sigma\leq \tau$. Let $F_{\sigma} = 0$ and $F_{\tau} = W$, and $G_{\sigma} = W = G_{\tau}$ with $\smap G{\sigma}{\tau} = \id$. Then $F$ embeds into $G$, and we claim that $F\overset{\id}{\rightarrow} F$ can not be extended to $G\rightarrow F$.\\
    \begin{minipage}{\textwidth}
    \centering
    \vspace{10pt}
    \begin{tikzcd}
         & & \\
        0 \ar{r} & F \ar{r} \ar{dr}[swap]{\id} & G \ar[dashed]{d}{?} \\
        & & F
    \end{tikzcd}%
    \hspace{40pt}
    \begin{tikzcd}
        &[-18pt] F & G & F \\
        \tau & W \ar{r}{\id} \ar[bend left]{rr}{\id} & W \ar{r}{?} & W \\
        \sigma & 0 \ar{u}{0} \ar{r}{0} \ar[bend right]{rr}{0} & W \ar{u}{\id} \ar{r}{?} & 0 \ar{u}{0}
    \end{tikzcd}
    \vspace{10pt}
    \end{minipage}
Indeed, the only way to make the right square commute is for both maps to be $0$, but then the top triangle does not commute.
\end{example}
We can use the same reasoning for a sheaf on any poset with a non-zero vector space one step above a zero vector space. Below we demonstrate one other obstruction to injectivity.

\begin{example}\label{ex:non-injective_sheaf_2}
We consider a three-element ``V'' shaped poset, a vector space $W$, and two different endomorphisms $f,g: W\rightarrow W$. We define sheaves $A$, $B$ and $F$ as follows:

    \begin{minipage}{.95\textwidth}
    \centering
    \vspace{10pt}
    \begin{tikzcd}[column sep = 7pt]
        & A & \\[-10pt]
        W & & W \\
        & 0 \ar{lu} \ar{ru} &
    \end{tikzcd}%
    \hspace{30pt}
    \begin{tikzcd}[column sep = 7pt]
        & B & \\[-10pt]
        W & & W \\
        & W \ar{lu}{f} \ar[swap]{ru}{g} &
    \end{tikzcd}%
    \hspace{30pt}
    \begin{tikzcd}[column sep = 7pt]
        & F & \\[-10pt]
        W & & W \\
        & W \ar{lu}{\id} \ar[swap]{ru}{\id} &
    \end{tikzcd}%
    \vspace{10pt}
    \end{minipage}
    
    We claim that $F$ does not satisfy the condition in Definition~\ref{def:injective_sheaf_2}. The sheaf $A$ embeds into $B$, and we choose $\alpha: A\rightarrow F$ to be the analogous embedding. To define an extension $\beta: B\rightarrow F$, we only have a choice for the bottom map $\beta_0: W\rightarrow W$. However, commutativity requires $f = \beta_0 = g$, which is impossible to satisfy, since $f\neq g$.
\end{example}

Avoiding the obstructions above, we define the simplest injective sheaves as follows.

\begin{definition}[{cf. \cite[Definition 7.1.3]{Curry2014}}] 
\label{def:indecomposable_injective_sheaf}
    For each $\pi\in\Pi$, we define an indecomposable injective sheaf $[\pi]$ as
    
    \begin{minipage}{.4\textwidth}
        \begin{align*}
            [\pi](\gamma) \coloneqq
            \begin{cases}
                k &\text{ if $\gamma\leq \pi$,} \\
                0 &\text{ otherwise,}
            \end{cases}
        \end{align*}
    \end{minipage}
    \begin{minipage}{.1\textwidth}
        \vspace{.4cm}
        with
    \end{minipage}
    \begin{minipage}{.4\textwidth}
        \begin{align*}
            [\pi](\gamma\le \tau) \coloneqq
            \begin{cases}
                \id &\text{ if $\gamma\le\tau\leq \pi$,} \\
                0 &\text{ otherwise.}
            \end{cases}
        \end{align*}
    \end{minipage}
  
\end{definition}
   For $n\in\mathbb{Z}_{\ge 0}$, we denote by $[\pi]^n$, the direct sum $\bigoplus_{i=1}^n[\pi]$. For a vector space $V$, we denote by $[\pi]^V$, the sheaf $[\pi]^{\dim V}$, with an implicitly fixed isomorphism between $k^{\dim V}$ and $V$.
   
The following results can be found in \cite{Curry2014} and \cite{Shepard1985} for sheaves on cell complexes. We give a straightforward generalization of the results to sheaves on any finite poset. 

\begin{lemma}[{cf. \cite[Lemma 7.1.5]{Curry2014}}]\label{lem:elementary_injective_sheaf}
    Indecomposable injective sheaves are injective.
\end{lemma}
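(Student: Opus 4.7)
The plan is to reduce the sheaf-theoretic extension problem to a purely linear-algebraic one: I will show that a morphism into $[\pi]$ is the same data as a linear functional on the stalk at $\pi$, and then extend that functional along the injection $A(\pi)\hookrightarrow B(\pi)$.

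First, I would establish a natural bijection
\[
\Hom(A,[\pi]) \;\cong\; \Hom_k(A(\pi),k),
\]
for any sheaf $A$ on $\Pi$. In the forward direction, a natural transformation $\alpha:A\rightarrow[\pi]$ is sent to its component $\alpha(\pi):A(\pi)\rightarrow k$. In the reverse direction, a linear functional $\varphi:A(\pi)\rightarrow k$ is sent to the transformation $\alpha_\varphi$ defined by
\[
\alpha_\varphi(\gamma) \coloneqq
\begin{cases}
\varphi\circ A(\gamma\le\pi) & \text{if } \gamma\le\pi,\\
0 & \text{otherwise.}
\end{cases}
\]
Naturality of $\alpha_\varphi$ along a relation $\gamma\le\tau$ would be checked by case analysis. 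The only case requiring thought is when $\tau\le\pi$, in which case $\gamma\le\pi$ automatically by transitivity of the poset; here both sides unwind to $\varphi\circ A(\gamma\le\pi)$ using functoriality of $A$. When $\tau\not\le\pi$, both $\alpha_\varphi(\tau)$ and $[\pi](\gamma\le\tau)$ vanish, so the square commutes trivially. Conversely, any $\alpha:A\rightarrow[\pi]$ must satisfy $\alpha(\gamma) = \alpha(\pi)\circ A(\gamma\le\pi)$ for $\gamma\le\pi$, and $\alpha(\gamma) = 0$ otherwise (since the target is $0$), so the two constructions are mutually inverse. The bijection is easily seen to be natural in $A$.

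Second, I would deduce the injectivity of $[\pi]$ as follows. Suppose we are given an injection $\iota:A\hookrightarrow B$ and a morphism $\alpha:A\rightarrow[\pi]$. Applying the bijection to $\alpha$ yields a linear functional $\varphi \coloneqq \alpha(\pi):A(\pi)\rightarrow k$. Since $\iota(\pi):A(\pi)\rightarrow B(\pi)$ is an injection of finite-dimensional vector spaces, we may choose a direct sum decomposition $B(\pi) = \iota(\pi)(A(\pi))\oplus C$ and define $\psi:B(\pi)\rightarrow k$ to be $\varphi$ on the first summand and $0$ on $C$, so that $\psi\circ \iota(\pi) = \varphi$. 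Applying the inverse direction of the bijection to $\psi$ produces a morphism $\beta:B\rightarrow[\pi]$. By naturality of the bijection in $A$ (or by direct inspection of the formulas), $\beta\circ\iota$ and $\alpha$ correspond to the same functional $\psi\circ\iota(\pi) = \varphi$, hence $\beta\circ\iota = \alpha$, proving that $\beta$ is the desired extension.

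The main obstacle is the bookkeeping in the case analysis for the bijection in the first step; once that is in place, the extension step is an immediate application of the fact that linear functionals extend from any subspace of a finite-dimensional vector space, so no further obstructions of the sort in Examples~\ref{ex:non-injective_sheaf} and~\ref{ex:non-injective_sheaf_2} can arise.
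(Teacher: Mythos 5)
Your proof is correct and is essentially the paper's argument in different packaging: the paper directly defines the extension $\beta(\gamma)=\alpha(\pi)\circ g\circ B(\gamma\le\pi)$, where $g$ is a linear retraction of $f(\pi)$, and this is precisely your $\alpha_\psi$ with $\psi=\alpha(\pi)\circ g$ the extended functional, verified by the same case analysis on whether $\gamma\le\pi$. Isolating the bijection $\Hom(A,[\pi])\cong\Hom_k(A(\pi),k)$ first is a clean way to organize the identical computation, but it is not a different route.
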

\begin{proof}
    We show that $I=[\pi]$ for a fixed poset $\Pi$ and $\pi\in \Pi$ satisfies Definition~\ref{def:injective_sheaf_2}. Given an inclusion $A\xhookrightarrow{f} B$ and a natural transformation $\alpha: A\rightarrow I$, we need to find an extension $\beta: B\rightarrow I$. For the linear map $A(\pi)\xhookrightarrow{f(\pi)} B(\pi)$, there is a projection $A(\pi)\xleftarrow{g}B(\pi)$ such that $g f(\pi)=\id_{A(\pi)}$.
    We define
    \begin{align*}
        \beta(\gamma) \coloneqq
        \begin{cases}
            \alpha(\pi)\circ g\circ B(\gamma\leq \pi) &\text{ if $\gamma\leq \pi$,} \\
            0 &\text{ otherwise.}
        \end{cases}
    \end{align*}
    For every $\gamma$, this satisfies $\beta(\gamma) f(\gamma)=\alpha(\gamma)$, because if $\gamma\leq\pi$, then\[
    \beta(\gamma) f(\gamma) = \alpha(\pi)\, g\, B(\gamma\leq \pi) f(\gamma) = \alpha(\pi)\, g\, f(\pi) A(\gamma\leq \pi) = \alpha(\pi) A(\gamma\leq \pi) = \alpha(\gamma),
    \]
    and otherwise both sides are 0.
    For the commutativity conditions, consider $\gamma\leq\tau\leq\pi$. Then\[
    \beta(\tau) \smap B \gamma \tau = \alpha(\pi)\, g\, \smap B \tau \pi \smap B \gamma \tau = \alpha(\pi)\, g\, \smap B \gamma \pi = \beta(\gamma) = \smap I \gamma \tau \beta(\gamma).
    \]
    If $\gamma\leq\tau\not\leq\pi$, then both sides are $0$.
\end{proof}

\begin{lemma}[{cf.\ \cite[Lemma 1.3.1]{Shepard1985}}]
\label{lem:coker-injective}
    A direct sum of injective sheaves is injective. Additionally, if $I\xhookrightarrow{\alpha} J$ is an injective natural transformation with $I,J$ injective sheaves, then $J\cong I\oplus \coker\alpha  $, and $\coker\alpha$ is an injective sheaf. 
\end{lemma}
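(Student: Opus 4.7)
The plan is to prove the three assertions in the order stated, since each builds on the previous.

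\textbf{Direct sum of injectives is injective.} Given a finite family $\{I_\lambda\}$ of injective sheaves, an injection $A \hookrightarrow B$, and a natural transformation $\alpha : A \to \bigoplus_\lambda I_\lambda$, I would compose with each projection $\rho_\lambda : \bigoplus_\lambda I_\lambda \to I_\lambda$ to get $\alpha_\lambda = \rho_\lambda \circ \alpha : A \to I_\lambda$. Injectivity of each $I_\lambda$ (Definition~\ref{def:injective_sheaf_2}) yields an extension $\beta_\lambda : B \to I_\lambda$, and I would then assemble the $\beta_\lambda$ into a single natural transformation $\beta : B \to \bigoplus_\lambda I_\lambda$ with components $\beta_\lambda$. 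A direct check at each $\pi\in\Pi$ shows $\beta|_A = \alpha$, and naturality of $\beta$ follows coordinate-wise from naturality of the $\beta_\lambda$.

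\textbf{The splitting.} For $I \xhookrightarrow{\alpha} J$ with both injective, I would apply the defining extension property of $I$ (not of $J$) to the diagram with $A = I$, $B = J$, and the identity $\id_I : I \to I$ playing the role of the map into the injective target $I$. Injectivity of $I$ gives a retraction $r : J \to I$ satisfying $r \circ \alpha = \id_I$. Standard homological algebra in the (pointwise) abelian category of sheaves then shows that the short exact sequence
\[
0 \to I \xrightarrow{\alpha} J \xrightarrow{q} \coker\alpha \to 0
\]
splits; concretely, the map $(r, q) : J \to I \oplus \coker\alpha$ is a pointwise isomorphism, with inverse given by $\alpha$ on the first factor and a chosen section $s : \coker\alpha \to J$ of $q$ with $r \circ s = 0$ on the second. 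Hence $J \cong I \oplus \coker\alpha$ as sheaves.

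\textbf{Cokernel is injective.} Finally, to show $\coker\alpha$ is injective, I would use that it is a direct summand of the injective sheaf $J$. Given $A \hookrightarrow B$ and $f : A \to \coker\alpha$, compose with the section $s : \coker\alpha \to J$ obtained above to get $s \circ f : A \to J$; injectivity of $J$ extends this to $g : B \to J$, and then $q \circ g : B \to \coker\alpha$ is the desired extension of $f$, since $q \circ g|_A = q \circ s \circ f = f$.

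The only subtle point I expect is verifying that the splitting is genuinely a splitting of sheaves and not only of vector spaces at each $\pi$; this is automatic because $r$ and $q$ are already natural transformations, so the isomorphism $(r,q)$ and its inverse respect the restriction maps. The rest is a direct diagram chase from the extension property in Definition~\ref{def:injective_sheaf_2}.
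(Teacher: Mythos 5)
Your proof is correct and follows essentially the same route as the paper: extend coordinate-wise through projections for the direct sum claim, produce a retraction $r:J\to I$ of $\alpha$ to split off $\coker\alpha$ as a summand, and then extend maps into $\coker\alpha$ by passing through $J$ and projecting back. Your parenthetical remark that the retraction comes from the extension property of $I$ (the target of $\id_I$), not of $J$, is the correct reading of Definition~\ref{def:injective_sheaf_2}, and if anything tightens the paper's own phrasing of that step.
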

\begin{proof}
This proof is standard for any abelian category, we include a sketch for completeness. Suppose $I=A\oplus B$, with $A,B$ injective sheaves. Suppose $F\hookrightarrow G$ and $F\rightarrow I$. Then composition with projection gives maps $F\rightarrow A$ and $F\rightarrow B$. By injectivity of $A$ and $B$, each map extends to $G\rightarrow A$ and $G\rightarrow B$, respectively. The sum of these maps defines an extension $G\rightarrow I$, proving that $I$ is injective. The second claim follows by extending the identity map $I\rightarrow I$ to $J\rightarrow I$ by $\alpha$ and the injectivity of $J$. Then, the sum of the extension and the quotient map define an isomorphism $J\rightarrow I\oplus\coker\alpha$. The final claim follows by composing a given map $F\rightarrow\coker\alpha$ with the extension by zero map, $\coker\alpha\hookrightarrow J$, to get $F\rightarrow J$. Then, for $F\hookrightarrow G$, we define (by the injectivity of $J$) an extension $G\rightarrow J$. By post-composing with the projection map, we get the desired extension $G\rightarrow\coker\alpha$.
\end{proof}

\begin{proposition}[{cf. \cite[Lemma 7.1.6]{Curry2014}, \cite[Theorem 1.3.2]{Shepard1985}}] 
\label{prop:decomposition_of_injective_sheaves}
  Every injective sheaf is isomorphic to a direct sum of indecomposable injective sheaves.
\end{proposition}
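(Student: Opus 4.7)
My plan is to argue by induction on $|\Pi|$. The base case is trivial: for $|\Pi|=1$, any sheaf is just a finite-dimensional vector space $V=I(\pi)$, which equals $[\pi]^V$. For the inductive step I pick a maximal element $\pi\in\Pi$, set $V=I(\pi)$, construct an injective natural transformation $[\pi]^V\hookrightarrow I$, and invoke Lemma~\ref{lem:coker-injective} to obtain $I\cong [\pi]^V\oplus I'$ with $I'$ injective and $I'(\pi)=0$. Because $\pi$ is maximal, $I'$ corresponds under extension by zero to a sheaf on the smaller poset $\Pi\setminus\{\pi\}$; this restriction inherits injectivity (given an injection $A'\hookrightarrow B'$ on $\Pi\setminus\{\pi\}$, extend both by zero to $\Pi$, apply injectivity of $I'$, and restrict back). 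The induction hypothesis then supplies a decomposition of $I'$ into indecomposable injectives indexed by $\Pi\setminus\{\pi\}$, and assembling with the $[\pi]^V$ summand gives the desired decomposition of $I$.

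The crux of the argument is the construction of the embedding $[\pi]^V\hookrightarrow I$. Let $S$ be the skyscraper sheaf with $S(\pi)=V$ and $S(\gamma)=0$ for $\gamma\ne\pi$. The maximality of $\pi$ guarantees that $S$ is a subsheaf of $I$ (via the identity at $\pi$ and zero elsewhere) and of $[\pi]^V$, since there are no nontrivial structure maps out of $\pi$. Applying the injectivity of $I$ to the inclusion $S\hookrightarrow[\pi]^V$ and the inclusion $S\hookrightarrow I$ yields an extension $\beta:[\pi]^V\to I$ whose $\pi$-component is $\id_V$. I then verify pointwise injectivity of $\beta$: for $\gamma\le\pi$, the naturality square corresponding to $[\pi]^V(\gamma\le\pi)=\id_V$ forces $I(\gamma\le\pi)\circ\beta(\gamma)=\beta(\pi)\circ\id_V=\id_V$, so $\beta(\gamma)$ admits a left inverse and is injective; at $\gamma\not\le\pi$ the source vanishes. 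The cokernel at $\pi$ is then $V/V=0$, matching my claim that $I'(\pi)=0$.

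The main obstacle is producing the embedding $\beta$: elements of $V=I(\pi)$ need not possess canonical preimages under the various maps $I(\gamma\le\pi)$, so only the injectivity of $I$ (applied to the inclusion of skyscrapers into $[\pi]^V$) supplies the coherent family of backward lifts that witness a copy of $[\pi]^V$ inside $I$. The maximality of $\pi$ is indispensable throughout: without it, $S$ would fail to be a subsheaf of $I$ because the structure maps $I(\pi\le\tau)$ for $\tau>\pi$ would preclude $S(\tau)=0$, and the extension-by-zero correspondence used in the inductive reduction would break as well. The remaining bookkeeping — identifying indecomposable injectives on $\Pi\setminus\{\pi\}$ with their extensions by zero on $\Pi$ — is straightforward since for $\pi'\ne\pi$ the downset $\{\gamma\le\pi'\}$ is the same whether computed in $\Pi$ or in $\Pi\setminus\{\pi\}$.
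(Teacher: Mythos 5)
Your proof is correct and follows essentially the same route as the paper: embed the skyscraper at a top element into both $I$ and $[\pi]^V$, extend by injectivity of $I$ to get $\beta:[\pi]^V\to I$, check pointwise injectivity via the naturality square, split off the summand with Lemma~\ref{lem:coker-injective}, and induct. The only cosmetic difference is that you induct on $|\Pi|$ by deleting the maximal element (requiring the small extra check that restriction inherits injectivity, which you supply), whereas the paper inducts along a linear extension while keeping all sheaves on $\Pi$ and tracking supports.
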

\begin{proof}
We adapt the proof of \cite[Theorem 1.3.2]{Shepard1985} to the setting of finite posets on $n$ elements (rather than cell complexes). We fix some linear extension of the partial order, $(\pi_1,\dots,\pi_n)$, and let $\Pi_i=\left\{\pi_j\,\middle|\, j\leq i\right\}$. We will proceed with the proof by working inductively through this filtration of $\Pi$. We define support of a sheaf $I$ as\[
    \supp I \coloneqq \setdef{\pi\in\Pi}{I(\pi)\neq 0}.
\]

Assume that the result holds for injective sheaves supported on $\Pi_{i-1}$. Suppose $I$ is an injective sheaf with support contained in $\Pi_{i}$. If $\supp I\subseteq \Pi_{i-1}$, then the inductive assumption implies the result. Therefore, we are left to prove the result for $I$ such that $I(\pi_i)\neq 0$. Set $F_{\pi_i}$ to be the functor which assigns $I(\pi_i)$ to $\pi_i$ and the zero vector space to each other poset element (and the zero linear map to each poset relation). Then the identity map induces injective natural transformations
\[
F_{\pi_i}\xhookrightarrow{\alpha} I\qquad\text{and}\qquad F_{\pi_i}\hookrightarrow \bigoplus_{v\in B}[\pi_i],
\]
where $B$ is some basis of $I(\pi_i)$. Because $I$ is injective, we can extend $\alpha$ to a natural transformation
$
 \beta:  \bigoplus_{v\in B}[\pi_i]\rightarrow I.
$
It is injective, because for every $\sigma\leq\pi_i$, the linear map $I(\sigma\leq\pi_i)\beta(\sigma)=\beta(\pi_i)=\alpha(\pi_i)$ is injective.
By Lemma~\ref{lem:coker-injective}, this implies that 
\[
I\cong \coker \beta\oplus\bigoplus_{v\in B}[\pi_i],
\]
and that $\coker \beta$ is injective. Because $\supp\coker \beta\subseteq \Pi_{i-1}
$, the inductive hypothesis completes the proof.
\end{proof}

\subsection{Natural transformations between injective sheaves}\label{sec:maps}

Before we introduce injective resolutions, we take a brief detour to discuss natural transformations between injective sheaves.  To describe a map between two sheaves on a poset, in general, we need to give a linear map for each $\pi\in\Pi$. For two injective sheaves the situation is simpler.

Given a natural transformation $\varphi: I\rightarrow J$ between two injective sheaves, and a decomposition into indecomposable injective sheaves as in Lemma~\ref{prop:decomposition_of_injective_sheaves},
\[
I = \bigoplus_{i=1}^m [\pi_i]^{p_i}\hspace{15pt}\text{ and }\hspace{15pt}
J = \bigoplus_{j=1}^n [\sigma_j]^{s_j},
\]
$\varphi$ can be uniquely described by a collection of linear maps $f_{ij}: k^{p_i}\rightarrow k^{s_j}$, for each pair $i,j$ such that $\sigma_j\le\pi_i$. Moreover, each collection of linear maps defines a natural transformation. 

\begin{lemma}\label{lem:maps_between_injective_sheaves}
Suppose $I=\bigoplus_{i=1}^m [\pi_i]^{p_i}$ and $J= \bigoplus_{j=1}^n [\sigma_j]^{s_j}$. Then
\[\Hom(I,J) \ \ \cong\ \  \bigoplus_{\mathclap{\substack{i,j:\\ \sigma_j\le\pi_i}}} \Hom(k^{p_i},k^{s_j})\cong \bigoplus_{\mathclap{\substack{i,j:\\ \sigma_j\le\pi_i}}}k^{p_i s_j}, \]
where $\Hom(I,J)$ denotes the set of natural transformations from $I$ to $J$ and $\Hom(k^{p_i},k^{s_j})$ denotes the set of linear transformations from $k^{p_i}$ to $k^{s_j}$. 
\end{lemma}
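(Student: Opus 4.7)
The plan is to exploit additivity of $\Hom$ in both variables to reduce the statement to computing $\Hom([\pi],[\sigma])$ for single indecomposable summands, and then to verify by a direct calculation that $\Hom([\pi],[\sigma])\cong k$ precisely when $\sigma\le\pi$ and is $0$ otherwise; with multiplicities put back in, naturality forces the natural transformation to be given by a single matrix in $\Hom(k^{p_i},k^{s_j})$.

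First I would observe that the natural map
\[
\bigoplus_{i,j}\Hom\bigl([\pi_i]^{p_i},[\sigma_j]^{s_j}\bigr)\longrightarrow \Hom(I,J)
\]
is an isomorphism, since the category of sheaves on $\Pi$ is additive: a natural transformation out of a direct sum is uniquely determined by its components, and dually for a direct sum target, one can compose with injections and projections. This reduces the task to computing each summand $\Hom([\pi_i]^{p_i},[\sigma_j]^{s_j})$.

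The core computation is $\Hom([\pi],[\sigma])$. Let $\varphi\colon[\pi]\to[\sigma]$ be a natural transformation. Since $[\pi](\gamma)=0$ for $\gamma\not\le\pi$ and $[\sigma](\gamma)=0$ for $\gamma\not\le\sigma$, the component $\varphi(\gamma)$ is forced to be zero unless $\gamma\le\pi$ and $\gamma\le\sigma$. If $\sigma\not\le\pi$, then in particular $\varphi(\sigma)=0$ (because $[\pi](\sigma)=0$); for any $\gamma\le\sigma$, naturality on the relation $\gamma\le\sigma$ yields $\smap{[\sigma]}{\gamma}{\sigma}\circ\varphi(\gamma)=\varphi(\sigma)\circ\smap{[\pi]}{\gamma}{\sigma}=0$, and since $\smap{[\sigma]}{\gamma}{\sigma}=\id_k$ this forces $\varphi(\gamma)=0$, so $\varphi=0$. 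If instead $\sigma\le\pi$, then $\varphi(\sigma)\in\Hom(k,k)\cong k$ is an arbitrary scalar $c$; for any $\gamma\le\sigma$, naturality forces $\varphi(\gamma)=c$, while for $\gamma\le\pi$ with $\gamma\not\le\sigma$ the target is $0$ and $\varphi(\gamma)=0$. A case check on the naturality squares (split according to whether each endpoint of $\tau\le\gamma$ lies below $\sigma$, below $\pi$, or neither) confirms the resulting assignment is a natural transformation, so $\Hom([\pi],[\sigma])\cong k$.

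The same argument applied to $[\pi]^{p}$ and $[\sigma]^{s}$ with $\sigma\le\pi$ shows that a natural transformation is determined by a single linear map $c\colon k^{p}\to k^{s}$, assigned constantly at every $\gamma\le\sigma$ and zero elsewhere; this gives $\Hom([\pi]^{p},[\sigma]^{s})\cong\Hom(k^{p},k^{s})\cong k^{ps}$ when $\sigma\le\pi$ and $0$ otherwise. Summing over all pairs $(i,j)$ and dropping the vanishing summands yields the claimed isomorphism. The only mildly delicate point is the case analysis verifying that the assignment $\varphi(\gamma)=c$ for $\gamma\le\sigma$ (and zero elsewhere) is natural across every relation $\tau\le\gamma$ in $\Pi$; everything else is straightforward additivity bookkeeping.
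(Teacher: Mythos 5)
Your proposal is correct and follows essentially the same route as the paper: decompose $\Hom(I,J)$ by additivity into components between (powers of) indecomposables, then use naturality together with the fact that $[\sigma](\tau\le\sigma)=\id$ to show each component is determined by its value at $\sigma$, which vanishes unless $\sigma\le\pi$ and is otherwise an arbitrary linear map. The only difference is that you spell out the naturality case-check in slightly more detail than the paper does.
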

\begin{proof}
Using projection and inclusion maps of the direct sum, $\Proj$, $\Incl$, a map $\varphi: I\rightarrow J$ can be decomposed as a sum of maps $\varphi_{ij}=\Proj_{[\sigma_j]^{s_j}}\circ \varphi \circ \Incl_{[\pi_i]^{p_i}}$ between the powers of indecomposable injective sheaves. Consider $\tau\in\Pi$. If $\tau\not\leq\sigma_j$, then $\varphi_{ij}(\tau)=0$.
Otherwise, $
\varphi_{ij}(\tau)
= \varphi_{ij}(\tau) \circ [\sigma_j]^{s_j}(\tau\leq\sigma_j)
= \varphi_{ij}(\sigma_j) \circ [\pi_i]^{p_i}(\tau\leq\sigma_j)
= \varphi_{ij}(\sigma_j)
$. This shows that $\varphi_{ij}$ is determined by the linear map $f_{ij}\coloneqq\varphi_{ij}(\sigma_j)$. Moreover, this map is necessarily $0$ whenever $\sigma_j\not\leq \pi_i$, and it can be any linear map otherwise.
\end{proof}

In other words, we can represent a natural transformation $\varphi:I\rightarrow J$ as just one $(\sum_{j=1}^n s_j) \times (\sum_{i=1}^m p_i)$ matrix with rows and columns labeled by the indecomposable injective sheaves in the decomposition of $J$ and $I$, respectively. The value $\varphi\big[[\sigma],[\pi]\big]$ at position labeled by $([\sigma],[\pi])$ gives the linear map between $[\pi](\sigma)$ and $[\sigma](\sigma)$. It is always $0$ if $\sigma\not\leq\pi$. Note that we can have more rows or columns labeled by a copy of the same $[\tau]$, and we do not distinguish different copies in our notation. The linear map $\varphi(\sigma)$ for $\sigma\in\Pi$ is described by the submatrix of all rows and columns labeled by elements in $\St\sigma$ (see Figure~\ref{fig:4-simplex_with_extra_edges_matrix}).
\section{Injective Resolutions}

In this section we give definitions of injective hull and resolution, and present our main theoretical results about the minimal injective resolutions.

\begin{definition}
An injective hull of a sheaf $F$ is an injective sheaf $I$ together with an injective natural transformation $F\hookrightarrow I$.
\end{definition}

\begin{definition}
A (bounded) complex of sheaves, denoted $A^\bullet$, is a sequence of sheaves $A^i$ and natural transformations $\mu^i$
\[
\cdots\rightarrow A^i\xrightarrow{\mu_i}A^{i+1}\xrightarrow{\mu_{i+1}}A^{i+2}\xrightarrow{\mu_{i+2}}\cdots
\]  
such that $\mu_{i+1}\circ\mu_i=0$ for each $i$, and $A^i=0$ for $|i|$ sufficiently large. A complex is exact if $\im\mu_i=\ker\mu_{i+1}$ for each $i$. A morphism $\alpha^\bullet:A^\bullet\rightarrow B^\bullet$ between complexes of sheaves is a collection of natural transformations $\alpha^i:A^i\rightarrow B^i$ such that the diagrams commute: 
\begin{center}
   \begin{tikzcd}
        A^i\ar{r}{\mu^i} \ar{d}{\alpha^i} & A^{i+1} \ar{d}{\alpha^{i+1}} \\
        B^i \ar{r}{\nu^i} & B^{i+1}
    \end{tikzcd}
\end{center}
\end{definition}
\begin{definition}
\label{def:injective_resolution}
An injective resolution of a sheaf $F$
is an exact sequence 
\[0\rightarrow F\xrightarrow{\alpha} I^{0}\xrightarrow{\eta^0} I^{1}\xrightarrow{\eta^{1}} I^{2} \xrightarrow{\eta^{2}} \dots\]
where $I^j$ is an injective sheaf for each $j$. We denote by $I^\bullet$ the complex 
\[
\cdots\rightarrow 0\rightarrow I^0\xrightarrow{\eta^0}I^1\xrightarrow{\eta^1}\cdots.
\]
\end{definition}
A classical result of sheaf theory is that each sheaf admits an injective resolution (though it need not be unique)~\cite{Iversen}. In the remainder of the paper, we will introduce and study explicit algorithms for computing injective resolutions of a given sheaf $F$.

\subsection{Minimal Injective Resolutions}
We will now define minimal injective resolutions, and show that they are unique up to isomorphism of complexes. We fix a sheaf $F$ on a finite poset $\Pi$. 

\begin{definition}\label{def:maximal-vectors}
    A vector $s\in F(\pi)$ is maximal if $F(\pi\le\tau)(s)=0$ for each $\tau\ge \pi$. Let $M_F(\pi)$ be the subspace of maximal vectors in $F(\pi)$, i.e. \[M_F(\pi)\coloneqq\bigcap_{\pi< \sigma}\ker F(\pi\leq\sigma).\]
    Note that it is sufficient to take only the intersection of $\ker F(\pi\le\sigma)$ for each $\pi<_1\sigma$. 
\end{definition}

\begin{definition}
    An injective resolution $I^\bullet$ of $F$ is minimal if, for each $i$, the number of indecomposable injective summands of $I^i$ is minimal among all injective resolutions of $F$. 
\end{definition}

\begin{theorem}\label{thm:main-result}
Let $I^\bullet$ be an injective resolution of a sheaf $F$. The following are equivalent: 
\begin{enumerate}
    \item $I^\bullet$ is minimal.
    \item For any injective resolution $J^\bullet$ of $F$, there exists a morphism of complexes $\delta^\bullet:I^\bullet\rightarrow J^\bullet $ such that $\delta^i$ is injective for each $i$. 
    \item For each $i>0$, each $\pi\in\Pi$, and each maximal vector $s\in I^i(\pi)$, $s\in\im \eta^i(\pi)$. For each $\pi\in\Pi$ and each maximal vector $s\in I^0(\pi)$, $s\in\im \alpha(\pi)$.
    \item For each $i$, each $\pi\in\Pi$, and each maximal vector $s\in I^i(\pi)$, $\eta^i(\pi)(s)=0$.
\end{enumerate}
\end{theorem}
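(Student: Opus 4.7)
The plan is to prove $(3)\Leftrightarrow(4)$ first, then establish the cycle $(3)\Rightarrow(2)\Rightarrow(1)\Rightarrow(4)$. The equivalence $(3)\Leftrightarrow(4)$ is immediate from exactness of $I^\bullet$: $\eta^i(\pi)(s)=0$ iff $s\in\ker\eta^i(\pi)=\im\eta^{i-1}(\pi)$, with $\im\alpha$ taking the role of $\im\eta^{-1}$ when $i=0$. The implication $(2)\Rightarrow(1)$ follows from Lemma~\ref{lem:coker-injective}: an injective $\delta^i:I^i\hookrightarrow J^i$ splits $J^i\cong I^i\oplus\coker\delta^i$, so the number of indecomposable summands of $I^i$ is bounded by that of $J^i$.

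For $(3)\Rightarrow(2)$, I would inductively build chain maps $\delta^\bullet:I^\bullet\to J^\bullet$ and $\epsilon^\bullet:J^\bullet\to I^\bullet$ lifting $\id_F$ by the standard extension argument using injectivity of the target sheaves. The composition $f^\bullet:=\epsilon^\bullet\delta^\bullet:I^\bullet\to I^\bullet$ is then a chain endomorphism lifting $\id_F$, and the key lemma to establish is: if $I^\bullet$ satisfies $(3)$, every such $f^\bullet$ has each $f^i$ an isomorphism. Granting this, $(f^i)^{-1}\epsilon^i$ is a left inverse of $\delta^i$, giving injectivity.

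The key lemma rests on a sublemma that a morphism $\phi:I\to J$ between injective sheaves is injective iff $\phi(\pi)|_{M_I(\pi)}$ is injective for every $\pi$, proved by induction down a linear extension of $\Pi$: if $\phi(\pi)(x)=0$, naturality and inductive injectivity of $\phi(\tau)$ for $\tau>\pi$ force $I(\pi\le\tau)(x)=0$, so $x$ is maximal. I would then prove the key lemma by induction on $i$: for $i=0$, any maximal $v\in M_{I^0}(\pi)$ is $\alpha(w)$ by $(3)$ and $f^0(v)=\alpha(w)=v$; for $i\ge 1$, a maximal $v=\eta^{i-1}(w)$ with $f^i(v)=\eta^{i-1}f^{i-1}(w)=0$ gives $f^{i-1}(w)\in\im\eta^{i-2}$, and because the chain-map identity $f^{i-1}\eta^{i-2}=\eta^{i-2}f^{i-2}$ together with $f^{i-2}$ being an isomorphism forces $f^{i-1}$ to restrict to an isomorphism of $\im\eta^{i-2}$, we conclude $w\in\im\eta^{i-2}$ and hence $v=0$.

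Finally, $(1)\Rightarrow(4)$ is proved contrapositively: assume a maximal $s\in I^i(\pi)$ has $\eta^i(s)\ne 0$ and exhibit a smaller resolution. Since $s$ generates an indecomposable summand $[\pi]_s\subset I^i$, and a morphism from $[\pi]$ is injective whenever its value at $\pi$ is nonzero, $\eta^i|_{[\pi]_s}$ is injective; Lemma~\ref{lem:coker-injective} then gives $I^{i+1}\cong\eta^i([\pi]_s)\oplus K$. After a change of basis making the complement of $[\pi]_s\subset I^i$ map into $K$, the exact two-term subcomplex $[\pi]_s\xrightarrow{\sim}\eta^i([\pi]_s)$ becomes a direct summand of $I^\bullet$ and may be removed. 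The main technical hurdle is the case $i=0$, where $F$ must remain in the retained complement; since $\eta^0(s)\ne 0$ implies $s\notin\im\alpha(\pi)$ and hence $\alpha(\pi)(F(\pi))\cap\Span(s)=0$, I would construct a projection $I^0\to[\pi]_s$ vanishing on $F$ directly by fixing its value at $\pi$ and propagating via the identifications $[\pi]_s(\tau)\cong[\pi]_s(\pi)$ for $\tau\le\pi$.
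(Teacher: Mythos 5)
Your proof is correct and follows essentially the same route as the paper: the same implication cycle, with $(1)\Rightarrow(4)$ realized by splitting off the exact two-term summand $[\pi]_s\xrightarrow{\sim}\eta^i([\pi]_s)$ (the paper phrases this as passing to the quotient by $[\pi]_s$ and $[\pi]_{\eta^i(\pi)(s)}$, which is the same operation), and $(3)\Rightarrow(2)$ by lifting $\id_F$ in both directions and using that a morphism out of an injective sheaf is injective once it is injective on maximal vectors. One small remark: the special treatment you propose for $i=0$ is unnecessary, because $\im\alpha=\ker\eta^0$ and $\ker\eta^i$ automatically lies in the adjusted complement $C'$ (if $x=a+c'$ with $a\in[\pi]_s$, $c'\in C'$ and $\eta^i(x)=0$, then $\eta^i(a)=-\eta^i(c')\in\eta^i([\pi]_s)\cap K=0$ forces $a=0$ by injectivity of $\eta^i\vert_{[\pi]_s}$), so $\alpha(F)$ survives in the retained subcomplex for free.
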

\begin{proof}
\underline{$1\Rightarrow 4$}: Assume there exists a maximal vector $s\in I^i(\pi)$ such that $\eta^i(\pi)(s)\neq 0$. Let $[\pi]_s$ be an indecomposable injective subsheaf supported on the down-set of $\pi$ with $s\in[\pi]_s(\pi)$, and $ \hat{I}^i:= I^i/[\pi]_s$ the quotient, with quotient map $q_i$. Similarly, let $\hat{I}^{i+1}:= I^{i+1}/[\pi]_{\eta^i(\pi)(s)}$. By Lemma \ref{lem:coker-injective}, $I^i\cong \hat{I}^i\oplus [\pi]_s$, $I^{i+1}\cong \hat{I}^{i+1}\oplus [\pi]_{\eta^i(\pi)(s)}$, and $\hat{I}^i$, $\hat{I}^{i+1}$ are injective sheaves. Then 
\begin{center}
    \begin{tikzcd}
    &&0\ar{d}&0\ar{d}&&\\
    & 0 \ar{r} \ar{d}
    &{[\pi]_s}\ar{r}{\eta^i\vert_{[\pi]_s}}\ar{d}
    &{[\pi]_{\eta^i(\pi)(s)}}\ar{r}\ar{d}
    &0\ar{d}
    &\\
    \cdots \ar{r}
    &I^{i-1}\ar{r}{\eta^{i-1}}\ar{d} {\id}
    &I^{i}\ar{r}{\eta^i}\ar{d}{q_i}
    &I^{i+1}\ar{r}{\eta^{i+1}}\ar{d}{q_{i+1}}
    &I^{i+2}\ar{r}\ar{d}{\id}
    &{\cdots}\\
    \cdots\ar{r}
    & I^{i-1}\ar{r} {\hat\eta^{i-1}}\ar{d}
    & {\hat I^{i}}\ar{r}{\hat\eta^i} \ar{d}
    & {\hat I^{i+1}}\ar{r}{\hat\eta^{i+1}} \ar{d}
    & I^{i+2}\ar{r}\ar{d}
    &{\cdots}\\
    &0&0&0&0&
    \end{tikzcd}
\end{center}
with columns and the top two rows exact. We will show that the bottom row is also exact. Suppose $x\in\ker\hat\eta^{i-1}(\sigma)$ for some $\sigma\in\Pi$. Then $\eta^{i-1}(\sigma)(x)\in\ker q_i (\sigma) = [\pi]_s(\sigma)$. Because $I^\bullet$ is a chain complex, $\eta^i(\sigma)\circ\eta^{i-1}(\sigma)(x)=0$. By assumption the restriction of $\eta^i(\sigma)$ to $[\pi]_s(\sigma)$ is an injective linear map. Therefore, $\eta^{i-1}(\sigma)(x)=0$. This shows that $\ker\hat\eta^{i-1}=\ker\eta^{i-1}=\im\eta^{i-2}=\im\hat\eta^{i-2}$. By similar diagram chasing arguments, one can show that
\begin{align*}
    &\ker\hat\eta^i=q_i( \ker\eta^i) = q_i(\im\eta^{i-1})=\im\hat\eta^{i-1},\\
    &\ker\hat\eta^{i+1}=q_{i+1}(\ker\eta^{i+1})=q_{i+1}(\im\eta^i)=\im\hat\eta^i,\text{ and }\\
    &\im\hat\eta^{i+1}=\im\eta^{i+1}=\ker\eta^{i+2}=\ker\hat\eta^{i+2}.
\end{align*}   
Therefore, 
\[
0\rightarrow F \rightarrow I^0\rightarrow \cdots\rightarrow I^{i-1}\xrightarrow{\tilde\eta^{i-1}}\hat{I}_{i}\xrightarrow{\hat{\eta}_{i}}\hat{I}_{i+1}\xrightarrow{\hat\eta^{i+1}}I^{i+2}\rightarrow \cdots
\]
is an exact sequence, and an injective resolution of $F$, with fewer indecomposable injective summands than $I^\bullet$, which shows that $I^\bullet$ is not minimal.

\underline{$4\Leftrightarrow 3$}: Follows from the exactness of the injective resolution $I^\bullet$.

\underline{$3\Rightarrow 2$}: Let 
\[
0\rightarrow F\xrightarrow{\alpha} I^0\xrightarrow{\eta^0}I^1\xrightarrow{\eta^1}\cdots
\]
be an injective resolution of $F$ which satisfies criteria 3, and 
\[
0\rightarrow F\xrightarrow{\beta} J^0\xrightarrow{\lambda^0}J^1\xrightarrow{\lambda^1}\cdots
\]
be any injective resolution of $F$. We will inductively construct a chain complex morphism $\delta^\bullet:I^\bullet\rightarrow J^\bullet$ such that $\delta^i:I^i\rightarrow J^i$ is an injective natural transformation for each $i$. We begin by extending the natural transformation $\beta:F\rightarrow J^0$ through the injection $0\rightarrow F\xrightarrow{\alpha} I^0$, by the injectivity of $J^0$, resulting in the commutative diagram
\begin{center}
\begin{tikzcd}
0\ar{r}&F\ar{r}{\alpha}\ar{d}{\id}&I^0\ar{r}\ar[dashed]{d}{\delta^0}&\cdots\\
0\ar{r}&F \ar{r}{\beta} &J^0\ar{r}&\cdots
\end{tikzcd}
\end{center}
Similarly, we can extend the map $\alpha:F\rightarrow I^0$ to a map $\gamma^0:J^0\rightarrow I^0$, resulting in a commutative diagram 
\begin{center}
\begin{tikzcd}
0\ar{r}&F\ar{r}{\alpha}&I^0\ar{r}&\cdots\\
0\ar{r}&F \ar{u}{\id}\ar{r}{\beta} &J^0\ar{r}\ar[dashed]{u}{\gamma^0}&\cdots
\end{tikzcd}
\end{center}
We claim that $\ker\gamma^0\circ\delta^0=0$.
By assumption on $I^\bullet$, for each maximal vector $s\in I^0(\sigma)$, there exists $x\in F(\sigma)$ such that $\alpha(\sigma)(x)=s$. Because the above diagrams commute, $\beta(\sigma)(x)=\delta^0(\sigma)(s)$. Moreover, again by commutativity, 
\[\gamma^0(\sigma)\circ \delta^0(\sigma)(s)= \gamma^0(\sigma)\circ \beta(\sigma)(x)=\alpha(\sigma)(x)=s,\] which proves that $\ker \gamma^0\circ\delta^0=0$, because every non-zero vector maps to some non-zero multiple of a maximal vector via the sheaf maps. In particular, $\delta^0$ is injective.

We continue inductively. Suppose we have defined $\delta^0$ through $\delta^k$ and $\gamma^0$ through $\gamma^k$ such that  
\begin{center}
\begin{tikzcd}
0\ar{r}&F\ar{r}{\alpha}\ar{d}{\id}
&I^0\ar{r}\ar[swap, bend right = 30]{d}{\delta^0}
&\cdots \ar{r}{\eta^{k-1}}
&I^k\ar{r}\ar[swap, bend right = 30]{d}{\delta^k}
&I^{k+1}\ar{r}&\cdots\\
0\ar{r}&F \ar{r}{\beta} 
&J^0\ar{r}\ar[swap, bend right = 30]{u}{\gamma^0}
&\cdots\ar{r}{\lambda^{k-1}}
&J^k\ar{r}\ar[swap, bend right = 30]{u}{\gamma^k}
&J^{k+1}\ar{r}&\cdots
\end{tikzcd}
\end{center}
commutes for each square and $\ker\gamma^i\circ\delta^i=0$ for each $i$. 

Then $\delta^k$ and $\gamma^k$ induce natural transformations $\delta^k:\coker\eta^{k-1}\rightarrow \coker \lambda^{k-1}$ and $\gamma^k:\coker\lambda^{k-1}\rightarrow \coker \eta^{k-1}$, respectively. Using the injectivity of $I^{k+1}$ and $J^{k+1}$, we extend the maps from $\coker\eta^{k-1}\rightarrow J^{k+1}$ and $\coker\lambda^{k-1}\rightarrow I^{k+1}$, respectively:
\begin{center}
\begin{tikzcd}
I^k\ar{rr}{\eta^k}\ar[hookrightarrow, swap, bend right = 15]{ddd}{\delta^k}\ar[two heads]{dr}
& &I^{k+1}\ar[bend right = 15, swap, dashed]{ddd}{\delta^{k+1}}\\
 &\coker\eta^{k-1}\ar[ swap, bend right = 30]{d}{\delta^k}\ar[hookrightarrow]{ur}& \\
&\coker\lambda^{k-1}\ar[swap, bend right = 30]{u}{\gamma^k}\ar[hookrightarrow]{dr}&\\
J^k\ar{rr}{\lambda^k}\ar[two heads]{ur}\ar[swap, bend right = 15]{uuu}{\gamma^k}
&&J^{k+1}\ar[swap, bend right = 15, dashed]{uuu}{\gamma^{k+1}}
\end{tikzcd}
\end{center}
By diagram chasing, $\gamma^{k}\circ\delta^{k}(\im\eta^{k-1})\subset \im\eta^{k-1}$. By the inductive assumption, $\ker \gamma^{k}\circ\delta^{k}=0$. Therefore, as a map on $\coker\eta^{k-1}$, $ \gamma^{k}\circ\delta^{k}$ is injective. By an argument analogous to the above proof that $\ker \gamma^{0}\circ\delta^{0}=0$ is injective, we have that $\ker \gamma^{k+1}\circ\delta^{k+1}=0$. This implies that $\delta^{k+1}$ is injective. 
 

\underline{$2\Rightarrow 1$}: By condition 2, for each injective resolution $J^\bullet$, there are injective maps $\delta^i:I^i\hookrightarrow J^i$ for each $i$. The injectivity of $\delta^i$ implies that the number of indecomposable injective summands of $J^i$ is greater than that of $I^i$, which proves that $I^\bullet$ is minimal. 


\end{proof} 

The proof of the theorem yields several immediate corollaries.

\begin{corollary}\label{cor:existence}
For each sheaf $F$ on a finite poset $\Pi$, there exists a unique (up to isomorphism of chain complexes) minimal injective resolution. 
\end{corollary}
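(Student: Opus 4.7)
\bigskip

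\noindent\textbf{Proof proposal.} The plan is to deduce both existence and uniqueness directly from the equivalent characterizations established in Theorem~\ref{thm:main-result}, together with Lemma~\ref{lem:coker-injective} and Proposition~\ref{prop:decomposition_of_injective_sheaves}.

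For existence, I would start from any injective resolution of $F$, which is guaranteed by the classical result cited in the text (and is bounded because $\Pi$ is finite and the stalks are finite-dimensional, so the total number of indecomposable summands across all $I^i$ is a finite nonnegative integer). If this resolution already satisfies condition~4 of Theorem~\ref{thm:main-result}, we are done. Otherwise, there exist $i$, $\pi\in\Pi$, and a maximal vector $s\in I^i(\pi)$ with $\eta^i(\pi)(s)\neq 0$. The construction carried out in the proof of $1\Rightarrow 4$ then produces a new injective resolution of $F$ in which $I^i$ and $I^{i+1}$ each lose one indecomposable summand (a copy of $[\pi]$) while all other $I^j$ are unchanged. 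Since the total number of indecomposable summands is a nonnegative integer that strictly decreases under each such step, iterating this reduction must terminate at a resolution satisfying condition~4 and hence, by Theorem~\ref{thm:main-result}, minimal.

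For uniqueness, suppose $I^\bullet$ and $J^\bullet$ are both minimal injective resolutions of $F$. Applying condition~2 of Theorem~\ref{thm:main-result} to $I^\bullet$, there is a morphism of complexes $\delta^\bullet:I^\bullet\rightarrow J^\bullet$ with each $\delta^i$ injective. By Lemma~\ref{lem:coker-injective}, for each $i$ we obtain a decomposition $J^i\cong I^i\oplus\coker\delta^i$ in which $\coker\delta^i$ is itself an injective sheaf, and hence by Proposition~\ref{prop:decomposition_of_injective_sheaves} a direct sum of indecomposable injectives. Consequently, the number of indecomposable injective summands of $J^i$ equals that of $I^i$ plus that of $\coker\delta^i$. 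Since both resolutions are minimal, these summand counts at level $i$ must agree, forcing $\coker\delta^i=0$ and therefore $\delta^i$ to be an isomorphism for every $i$. Thus $\delta^\bullet$ is an isomorphism of chain complexes.

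The only subtle point, and the main potential obstacle, is making sure the reduction step in the existence argument genuinely terminates; this is resolved by the observation that the starting resolution has finitely many indecomposable summands overall (since $\Pi$ and every $F(\pi)$ are finite-dimensional, and the reduction is the standard way to see that minimal injective resolutions on finite posets are bounded), and each reduction strictly decreases this finite count. The uniqueness argument is then formal, relying on the well-definedness of the number of indecomposable injective summands, which is implicit in Proposition~\ref{prop:decomposition_of_injective_sheaves} and can if needed be verified by comparing $\dim I^i(\pi)$ with $\dim J^i(\pi)$ at each $\pi\in\Pi$.
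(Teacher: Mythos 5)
Your proposal is correct and follows essentially the same route as the paper: existence by iterating the quotient construction from the proof of $1\Rightarrow 4$ until condition~4 holds, and uniqueness via the mutual injections supplied by condition~2 of Theorem~\ref{thm:main-result}. You simply make explicit two points the paper leaves implicit (termination of the reduction via the finite total count of indecomposable summands, and the summand-counting argument showing the injections are isomorphisms), which is a welcome elaboration rather than a different proof.
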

\begin{proof}
Notice that the proof of $1\Rightarrow 4$ in Theorem \ref{thm:main-result} shows that from any injective resolution $J^\bullet$ of $F$, and any maximal vector $s\in J^i(\pi)$ such that $\eta^i(\pi)(s)\neq 0$, we can construct an injective resolution $I^\bullet$ of $F$ by taking a quotient of $J^i$ and $J^{i+1}$ by $[\pi]_s$ and $[\pi]_{\eta^i(\pi)(s)}$, respectively. Therefore, by applying this procedure inductively, we can construct from any injective resolution $J^\bullet$, a minimal injective resolution $I^\bullet$. Existence of a minimal injective resolution then follows from the existence of injective resolutions. By Theorem \ref{thm:main-result} property 2, any two minimal injective resolutions must be isomorphic as chain complexes. 
\end{proof}

We also explicitly illustrate the existence of the minimal injective resolution in Section~\ref{sec:algo}, where we provide an algorithm to construct it.

\begin{corollary}\label{cor:length_of_resolution}
    The minimal injective resolution of a sheaf $F$ on a finite poset $\Pi$ of height~$d$ consists of at most $d+1$ non-zero injective sheaves.
\end{corollary}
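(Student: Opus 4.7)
The plan is to bound the index $i$ for which the minimal resolution can have $I^i \neq 0$ by extracting from any indecomposable summand $[\pi]$ of $I^i$ a strict chain of length $i$ in $\Pi$. Together with the height hypothesis, this will force $i \leq d$ and hence $I^i = 0$ for $i > d$, giving the stated bound of at most $d+1$ nonzero terms.

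First I would decompose each $I^i \cong \bigoplus_\sigma [\sigma]^{n_\sigma^{(i)}}$ via Proposition~\ref{prop:decomposition_of_injective_sheaves}, and identify the maximal vectors at a fixed $\pi\in\Pi$. If $\sigma > \pi$, then $[\sigma](\pi\le\sigma) = \id$ is injective, so no nonzero vector in a summand $[\sigma]^{n_\sigma^{(i)}}(\pi)$ with $\sigma>\pi$ can be maximal at $\pi$. Conversely, every vector in $[\pi]^{n_\pi^{(i)}}(\pi)$ is maximal, since $[\pi](\tau) = 0$ whenever $\tau\not\le\pi$, in particular for $\tau>\pi$. This yields the key identification $M_{I^i}(\pi) = [\pi]^{n_\pi^{(i)}}(\pi)$.

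The central step is then the following claim: if $[\pi]$ appears as a summand of $I^i$ with $i\ge 1$, then some $[\sigma]$ with $\sigma > \pi$ must appear as a summand of $I^{i-1}$. To prove it, I pick any nonzero $s \in [\pi]^{n_\pi^{(i)}}(\pi) = M_{I^i}(\pi)$. By condition (3) of Theorem~\ref{thm:main-result}, $s = \eta^{i-1}(\pi)(y)$ for some $y \in I^{i-1}(\pi)$. Decompose $y = y_\pi + y_{>}$ along the splitting $I^{i-1}(\pi) = [\pi]^{n_\pi^{(i-1)}}(\pi)\oplus\bigoplus_{\sigma>\pi}[\sigma]^{n_\sigma^{(i-1)}}(\pi)$. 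Since $y_\pi \in M_{I^{i-1}}(\pi)$, condition (4) of Theorem~\ref{thm:main-result} gives $\eta^{i-1}(\pi)(y_\pi) = 0$, whence $\eta^{i-1}(\pi)(y_{>}) = s \ne 0$. Hence $y_{>}\ne 0$, which forces some $[\sigma]$ with $\sigma>\pi$ to occur in the decomposition of $I^{i-1}$.

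Iterating this claim starting from any summand $[\pi_i]$ of $I^i$ produces summands $[\pi_{i-1}], \ldots, [\pi_0]$ of $I^{i-1},\ldots,I^0$ with $\pi_i < \pi_{i-1} < \cdots < \pi_0$, a chain of $i+1$ distinct elements in $\Pi$. Because $\Pi$ has height $d$, this forces $i \leq d$, and the bound follows. I do not foresee a real obstacle; the only delicate point is the identification $M_{I^i}(\pi) = [\pi]^{n_\pi^{(i)}}(\pi)$, after which conditions (3) and (4) of Theorem~\ref{thm:main-result} drive the induction automatically.
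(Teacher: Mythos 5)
Your proof is correct and rests on the same mechanism as the paper's: conditions (3) and (4) of Theorem \ref{thm:main-result} force every maximal vector of $I^i(\pi)$ to be hit from a non-maximal vector of $I^{i-1}(\pi)$, hence from a summand $[\sigma]$ with $\sigma>\pi$. The paper phrases this as the longest chain of non-zero stalks strictly decreasing in $j$, while you phrase it by explicitly building an increasing chain $\pi_i<\cdots<\pi_0$ of generators; these are equivalent formulations of the same argument.
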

\begin{proof}
    The length of the longest chain of non-zero vector spaces in $I^j$ is strictly decreasing in $j$ in the minimal injective resolution. This is implied by properties 3 and 4: If $I^j(\pi)=0$, then property 3 implies that there are no maximal vectors in $I^{j+1}(\pi)$. Therefore, if $I^j(\tau)=0$ for all $\tau\geq\pi$, then also $I^{j+1}(\tau)=0$ for all $\tau\geq\pi$. Moreover, if $I^j(\pi)\neq 0$ and $I^j(\tau)=0$ for all $\tau>\pi$, then all vectors in $I^j(\pi)$ are maximal, and by property 4 and the argument above, $I^{k}(\pi)=0$ for all $k>j$.
\end{proof}

\begin{corollary}\label{cor:minimal_injective_hull}
    If $F\xrightarrow{\alpha} I$ is an injective hull such that for all $\pi\in\Pi$, all maximal vectors of $I(\pi)$ are in $\im\alpha(\pi)$, then it is the minimal injective hull.
\end{corollary}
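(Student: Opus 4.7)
The plan is to reduce this corollary to the characterization of minimal injective resolutions given by Theorem~\ref{thm:main-result}, specifically property~3 applied at degree zero. The idea is that the hypothesis on $\alpha$ is exactly the degree-zero part of property~3, and so any minimization procedure should leave $I^0 = I$ untouched.

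First I would extend the injective hull $F\xrightarrow{\alpha} I$ to some (not necessarily minimal) injective resolution
\[
0\to F\xrightarrow{\alpha} I\xrightarrow{\eta^0} J^1\xrightarrow{\eta^1} J^2\to\cdots,
\]
using the classical existence of injective resolutions. Then I would apply the iterative reduction procedure used in the proof of Corollary~\ref{cor:existence}: as long as there exists a maximal vector $s\in J^i(\pi)$ with $\eta^i(\pi)(s)\neq 0$, replace $J^i$ and $J^{i+1}$ by the quotients $J^i/[\pi]_s$ and $J^{i+1}/[\pi]_{\eta^i(\pi)(s)}$, respectively. Because each such step strictly decreases the total number of indecomposable summands, the procedure terminates, and the resulting complex satisfies property~4 of Theorem~\ref{thm:main-result}, hence is a minimal injective resolution of $F$.

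The key step is to verify that the reduction procedure never modifies the $0$-th term $I$ or the map $\alpha$. A reduction at index $i=0$ would require a maximal vector $s\in I(\pi)$ with $\eta^0(\pi)(s)\neq 0$; but by hypothesis $s\in\im\alpha(\pi)$, and by exactness of the resolution $\im\alpha=\ker\eta^0$, so $\eta^0(\pi)(s)=0$. All other reductions occur at indices $i\geq 1$ and modify only $J^i$ and $J^{i+1}$, so they leave $I$ and $\alpha$ untouched. Consequently the hypothesis continues to hold at every stage and no $i=0$ reduction is ever enabled, so the minimal resolution produced by the procedure still has $I^0 = I$ with structural map $\alpha$.

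I do not anticipate a serious obstacle: the entire argument is essentially a bookkeeping observation on top of the machinery already set up in Theorem~\ref{thm:main-result} and Corollary~\ref{cor:existence}. The only mild subtlety is making explicit that the reductions at higher indices preserve both the equality $J^0 = I$ and the map $\alpha$, which is immediate from their local nature. Once this is noted, uniqueness of the minimal injective resolution (Corollary~\ref{cor:existence}) identifies $F\xrightarrow{\alpha} I$ as the minimal injective hull.
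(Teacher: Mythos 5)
Your proof is correct, but it travels through Theorem~\ref{thm:main-result} by a different implication than the paper does. The paper's proof invokes the inductive construction from $3\Rightarrow 2$: since that construction of the injections $\delta^i$ only depends on initial segments of the resolution, the degree-zero hypothesis alone already produces an injective natural transformation $I\hookrightarrow J^0$ into the $0$-th term of \emph{any} injective resolution of $F$, and counting indecomposable summands gives minimality directly. You instead extend $F\xrightarrow{\alpha}I$ to a full resolution and run the quotient-reduction procedure from the proof of $1\Rightarrow 4$ (as packaged in Corollary~\ref{cor:existence}), observing that a reduction at index $0$ would need a maximal vector $s\in I(\pi)$ with $\eta^0(\pi)(s)\neq 0$, which is ruled out since $s\in\im\alpha(\pi)=\ker\eta^0(\pi)$ by exactness, and that higher-index reductions leave $I$ and $\alpha$ (and hence the hypothesis) intact. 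Both arguments are sound. The paper's route is shorter and yields slightly more information (the initial segment embeds into every resolution, not just a minimal one); your route requires the extra bookkeeping about termination and about which terms each reduction modifies, but it is more constructive in spirit and makes explicit that the minimal resolution can be built \emph{on top of} the given hull without altering it. One small point worth making explicit in your write-up: to pass from ``$I$ is the $0$-th term of a minimal injective resolution'' to ``$I$ is the minimal injective hull,'' you use that every injective hull extends to some injective resolution, so minimizing over $0$-th terms of resolutions is the same as minimizing over hulls; this is immediate from the same cokernel-embedding construction you use in your first step.
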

\begin{proof}
    The inductive construction in the proof of $3\Rightarrow 2$ in Theorem~\ref{thm:main-result} only depends on the initial segments of the resolution. Therefore, if the property $3$ is satisfied in an initial segment, then this initial segment injects in any injective resolution. In particular, this shows that if $F\xrightarrow{\alpha} I$ is an injective hull such that for all $\pi\in\Pi$ the maximal vectors in $I(\pi)$ are in $\im\alpha(\pi)$, then it is the minimal injective hull of $F$.
\end{proof}

\subsection{Indecomposable multiplicites of the minimal injective resolution}
Because the minimal injective resolution of a sheaf $F$ is unique, the multiplicity of an indecomposable injective sheaf in the minimal injective resolution is a well-defined invariant of $F$. It is natural to ask what topological information is captured with these multiplicities. Below we answer this question for the constant sheaf on a finite simplicial complex. 
\begin{definition}
  Let $I^\bullet$ be the minimal injective resolution of $F$. By $m^j_F(\sigma)$ we denote the multiplicity of $[\sigma]$ in $I^j$:
\[
I^j \cong \bigoplus_{\sigma\in\Pi}[\sigma]^{m^j_F(\sigma)}.
\]
Equivalently, we can define 
$m^j_F(\sigma):=\dim M_{I^j}(\sigma)$, 
where $M_{I^j}(\sigma)$ is as in Definition \ref{def:maximal-vectors}.
\end{definition}
\begin{theorem}\label{thm:multiplicities}
Let $\Sigma$ be a finite simplicial complex, $k_\Sigma$ the constant sheaf on $\Sigma$ (viewed as a poset with the face relation), and $H_c^\bullet (|\St\sigma|,k)$ be the singular cohomology with compact support of the geometric realization of $\St\sigma$. Then
\[
m^j_{k_\Sigma}(\sigma)= \dim H_c^{j+\dim\sigma}(|\St\sigma|,k).
\]
\end{theorem}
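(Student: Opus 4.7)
The plan is to identify $m^j_{k_\Sigma}(\sigma)$ with a local cohomology group of the constant sheaf at $\sigma$, and then to match this local cohomology with $H_c^{j+\dim\sigma}(|\St\sigma|,k)$ by separately computing both as the shifted reduced cohomology of the link $|\mathrm{Lk}\,\sigma|$. The minimality criterion Theorem~\ref{thm:main-result}(4) is the crucial tool: it forces the differentials in the complex of maximal vectors to vanish, so multiplicities can be read off as dimensions of cohomology groups rather than as Euler-type invariants.

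First, I would reduce to the case where $\sigma$ is the unique minimum of the ambient poset, by restricting $I^\bullet$ to the Alexandrov-open set $\St\sigma$. This restriction is exact, each indecomposable injective $[\tau]$ with $\tau\in\St\sigma$ restricts to the corresponding indecomposable injective on the subposet $\St\sigma$ (while $[\tau]$ with $\tau\notin\St\sigma$ restricts to $0$), and the minimality criterion Theorem~\ref{thm:main-result}(4) is local at each poset element: for any $\pi\in\St\sigma$ we have $\St\pi\subseteq\St\sigma$, so the notion of maximal vector at $\pi$ agrees on $I^\bullet$ and on $I^\bullet|_{\St\sigma}$. Hence $I^\bullet|_{\St\sigma}$ is the minimal injective resolution of $k_{\St\sigma}$, and $m^j_{k_\Sigma}(\sigma) = m^j_{k_{\St\sigma}}(\sigma)$.

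Next, I would reinterpret $m^j(\sigma)$ as a local cohomology dimension. When $\sigma$ is the minimum of the poset, the functor $M_{(-)}(\sigma)$ coincides with $\Gamma_{\{\sigma\}}$, the sections supported on the closed point $\{\sigma\}$, since $F(\St\sigma)=F(\sigma)$. By Theorem~\ref{thm:main-result}(4), each differential $\eta^j(\sigma)$ kills $M_{I^j}(\sigma)$, so the complex $\Gamma_{\{\sigma\}}(I^\bullet)$ has zero differentials and computes $R^j\Gamma_{\{\sigma\}}(k_{\St\sigma})$ directly as $M_{I^j}(\sigma)$. Thus $m^j(\sigma)=\dim R^j\Gamma_{\{\sigma\}}(k_{\St\sigma})$.

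Finally, I would show that both $R^j\Gamma_{\{\sigma\}}(k_{\St\sigma})$ and $H_c^{j+\dim\sigma}(|\St\sigma|,k)$ equal $\tilde{H}^{j-1}(|\mathrm{Lk}\,\sigma|,k)$. For the local cohomology, the localization triangle $R\Gamma_{\{\sigma\}}(k_{\St\sigma})\to R\Gamma(k_{\St\sigma})\to R\Gamma(k_{\St\sigma\setminus\{\sigma\}})$ combined with McCord's theorem delivers this: the order complex of $\St\sigma$ is a cone (hence contractible), while the order complex of $\St\sigma\setminus\{\sigma\}$ is the barycentric subdivision of $|\mathrm{Lk}\,\sigma|$ under the poset isomorphism $\tau\mapsto\tau\setminus\sigma$. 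For the compactly supported cohomology, one uses the standard product decomposition $|\St\sigma|\cong\mathbb{R}^{\dim\sigma}\times C^\circ(|\mathrm{Lk}\,\sigma|)$ (where $C^\circ$ is the open cone with apex included and base removed), the K\"unneth formula, and the identities $H_c^n(\mathbb{R}^d)=k$ for $n=d$ and $H_c^n(C^\circ L)=\tilde{H}^{n-1}(L)$. The main obstacle will be the careful bookkeeping of boundary cases, in particular $j=0$ and the case when $\sigma$ is maximal (so $|\mathrm{Lk}\,\sigma|=\emptyset$ and $|\St\sigma|\cong\mathbb{R}^{\dim\sigma}$), for which one must adopt the convention $\tilde{H}^{-1}(\emptyset)=k$ on both sides to obtain matching answers.
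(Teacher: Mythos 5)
Your proposal is correct, and its overall architecture coincides with the paper's: restrict $I^\bullet$ to the open subposet $\St\sigma$ (observing that criterion (4) of Theorem~\ref{thm:main-result} is local, so the restriction remains minimal), use that criterion to force the differentials on maximal vectors to vanish, and thereby read off $m^j_{k_\Sigma}(\sigma)$ as the $j$-th cohomology of a derived functor applied to $k_{\St\sigma}$. Your functor $\Gamma_{\{\sigma\}}$ is, on injective sheaves, literally the same as the paper's $f_!$ for $f:\St\sigma^0\rightarrow\mathrm{pt}$ (Shepard's formula identifies $f_!J$ with the maximal vectors of $J(\sigma^0)$), so the middle of the argument is identical in content. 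Where you genuinely diverge is the final comparison with compactly supported cohomology: the paper invokes the comparison theorem $R^jf_!\,I^\bullet\vert_{\St\sigma}\cong H^j_c(|\St\sigma^0|,k)$ as a black box from Shepard and Curry, whereas you compute both sides independently as $\tilde{H}^{j-1}(|\mathrm{Lk}\,\sigma|,k)$ — via the localization triangle for the closed point plus McCord's theorem on the sheaf side, and via the half-open-cone description of the vertex star plus K\"unneth on the topological side. Your route is more self-contained and makes the degree shift by $\dim\sigma$ and the boundary conventions (empty link, $\tilde{H}^{-1}(\emptyset)=k$) completely explicit, at the cost of routing the identification through the link, which the paper's citation avoids; both are valid, and the extra bookkeeping you flag at the end is exactly the part that needs care but does work out.
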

\begin{proof}
For $\sigma\in\Sigma$, there exists an abstract simplicial complex $K = \sigma^0\ast \left\{ \tau\setminus\sigma \,\middle|\, \tau\in\St\sigma\setminus\{\sigma\} \right\}$ with $\sigma^0\in K$ a zero simplex, such that $\St\sigma^0=\St\sigma$ as posets, and the geometric realization $|\St\sigma|$ is homeomorphic to $\mathbb{R}^{\dim\sigma}\times |\St\sigma^0|$. 
Let $I^\bullet$ be the minimal injective resolution of $k_\Sigma$. Then $I^\bullet\vert_{\St\sigma}$ is a minimal injective resolution of $k_{\St\sigma^0}$, because $\St\sigma=\St\sigma^0$ as posets. We will briefly abuse notation and think of $I^\bullet\vert_{\St\sigma}$ as a complex of injective sheaves on $\St\sigma^0$. 
Let $f:\St\sigma^0\rightarrow \text{pt}$. Because $\dim\sigma^0=0$, the map $f$ is a fibred cellular map in the sense of \cite[\S 3.3]{Shepard1985}. Therefore, by \cite[\S 1.6]{Shepard1985} (cf.\ \cite[Definition 5.1.15]{Curry2014}), $f_!(J)=\{s\in J(\sigma^0):s\text{ is a maximal vector}\}$, for each injective sheaf $J$ on $\St\sigma^0$. 
By standard results of sheaf theory (for example, \cite[Theorem 3.4.14]{Shepard1985} and \cite[Section 13.2]{Curry2014}), we have
\[R^jf_! I^\bullet\vert_{\St\sigma}\cong H^j_c(|\St\sigma^0|,k)
\]
(where $R^jf_!I^\bullet\vert_{\St\sigma}$ are the cohomology groups of the complex $f_!I^\bullet\vert_{\St\sigma}$; see \cite{Shepard1985,Bredon1997,Curry2014} for an introduction to the right derived functors $R^jf_!$). It follows from Theorem \ref{thm:main-result} that $ f_! I^\bullet\vert_{\St\sigma}$ is the complex
\[
0\rightarrow [\sigma^0]^{m_{k_\Sigma}^0(\sigma)}\rightarrow [\sigma^0]^{m_{k_\Sigma}^1(\sigma)}\rightarrow \cdots,
\]
with all chain maps equal to zero. Therefore, 
\[
\dim R^jf_! I^\bullet\vert_{\St\sigma}= m^j_{k_\Sigma}(\sigma).
\]
The result then follows from the isomorphisms
\[
 H_c^{j}(|\St\sigma^0|,k)\cong H_c^{j+\dim\sigma}(\mathbb{R}^{\dim\sigma}\times |\St\sigma^0|,k)\cong H_c^{j+\dim\sigma}( |\St\sigma|,k).
\]
\end{proof} 

\section{Algorithms for Computing Injective Resolutions}\label{sec:algo}

We describe two methods for constructing an injective resolution of a sheaf $F$ on a poset $\Pi$. 

\subsection{Injective Resolutions via the order complex}\label{sec:algo_non-inductive}

We begin with a non-inductive construction of a (not necessarily minimal) injective resolution of a given sheaf $F$. This section generalizes, from the constant sheaf to general sheaves, Lemma 1.3.17 of~\cite{Ladkani2008}. On a practical level, this allows one to compute $k$-th right derived functors without first computing the full injective resolution (see Section \ref{sec:derived-functors}).

\begin{definition}
    The order complex, $K(\Pi)$, of a finite poset $\Pi$, is the poset of strictly increasing chains $\pi_\bullet = \pi_0<\pi_1<\cdots<\pi_k$ in $\Pi$. The order complex has the structure of an abstract simplicial complex. Let $K^i(\Pi)$ denote the $i$-simplices of $K(\Pi)$, i.e. the set of chains $\pi_0<\pi_1<\cdots<\pi_i$ of length $i+1$.
\end{definition}
\begin{definition}
    A signed incidence relation on $K(\Pi)$ is an assignment to each pair of simplices $\sigma_\bullet,\gamma_\bullet\in K(\Pi)$ a number $[\sigma_\bullet:\gamma_\bullet]\in\{-1,0,1\}$, such that
    \begin{enumerate}
        \item if $[\sigma_\bullet:\gamma_\bullet]\neq 0$, then $\sigma_\bullet<_1\gamma_\bullet$, and 
        \item for each pair of simplices $(\sigma_\bullet,\gamma_\bullet)$,  \[\sum_{\tau_\bullet\in K(\Pi)}[\sigma_\bullet:\tau_\bullet][\tau_\bullet:\gamma_\bullet]=0.\] 
    \end{enumerate}
\end{definition}

\paragraph*{The construction.} Given a sheaf $F$ on $\Pi$, we define (recalling the notation of Definition \ref{def:indecomposable_injective_sheaf}) 
\[
I^k := \bigoplus_{\pi_\bullet\in K^k(\Pi)}[\pi_0]^{ F(\pi_k)}.
\]
Suppose $\pi_\bullet\in K^k(\Pi)$ and $\pi_\bullet <_1\tau_\bullet$ (i.e. the chain $\pi_\bullet$ is obtained from the chain $\tau_\bullet$ by removing one element). Then $\pi_0\ge \tau_0$ and $\pi_k\le\tau_{k+1}$. Therefore, 
\[
F(\pi_k\le\tau_{k+1})\in \Hom (F(\pi_k),F(\tau_{k+1}))\cong \Hom \left( [\pi_0]^{F(\pi_k)}, [\tau_0]^{ F(\tau_{k+1})}\right) . 
\]
Using this identification, we define the natural transformation
$
    \eta^k:I^k\rightarrow I^{k+1}
$
so that on the $\pi_\bullet$-summand $[\pi_0]^{F(\pi_k)}$ of $I^k$, 
\begin{align*}
    \eta^k\vert_{[\pi_0]^{ F(\pi_k)}}= \sum_{\pi_\bullet<_1\tau_\bullet}[\pi_\bullet:\tau_\bullet]F(\pi_k\le \tau_{k+1}),
\end{align*}
where $F(\pi_k\le \tau_{k+1})\in\Hom \left( [\pi_0]^{ F(\pi_k)}, [\tau_0]^{F(\tau_{k+1})}\right)$ is understood to have its codomain as the $\tau_\bullet$-summand of $I^{k+1}$. Let $\alpha:F\hookrightarrow I^0$ be the natural transformation given by the maps 
\[\alpha(\sigma)
\coloneqq\ 
\sum_{\sigma\le\gamma}F(\sigma\le\gamma)
\ :\ 
F(\sigma)\xhookrightarrow{\ \ \ \ }  \bigoplus_{\sigma\le\gamma}F(\gamma)
\ \reflectbox{ $\coloneqq$ } I^0(\sigma).\]

\begin{theorem}\label{thm:non-inductive}
The complex $0\rightarrow F\xrightarrow{\alpha} I^0\xrightarrow{\eta^0}I^1\xrightarrow{\eta^1} \cdots$ defined above is an injective resolution of $F$. 
\end{theorem}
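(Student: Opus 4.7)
To prove that the complex $0\to F\xrightarrow{\alpha} I^0\xrightarrow{\eta^0}I^1\xrightarrow{\eta^1}\cdots$ is an injective resolution, I would verify three things: (a) each $I^k$ is injective, (b) the sequence is a chain complex, and (c) it is exact. Point (a) is immediate, since each $I^k=\bigoplus_{\pi_\bullet\in K^k(\Pi)}[\pi_0]^{F(\pi_k)}$ is a direct sum of indecomposable injective sheaves and is therefore injective by Lemma~\ref{lem:coker-injective}. For (b) and (c) I would work pointwise: using that $[\pi_0](\sigma)=k$ when $\sigma\le\pi_0$ and $0$ otherwise, we have
\[
I^k(\sigma)\ =\ \bigoplus_{\substack{\pi_\bullet\in K^k(\Pi)\\ \pi_0\ge\sigma}}F(\pi_k),
\]
a direct sum indexed by chains of length $k+1$ in the sub-poset $\St\sigma$. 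Exactness on sheaves then amounts to exactness at each $\sigma$ of this sequence of finite-dimensional vector spaces.

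For (b), I would expand $\eta^{k+1}\circ\eta^k$ on the $\pi_\bullet$-summand and collect the contributions to each target $\gamma_\bullet$-summand (with $|\gamma_\bullet|=|\pi_\bullet|+2$). Such a contribution is a sum over intermediate chains $\tau_\bullet$ with $\pi_\bullet<_1\tau_\bullet<_1\gamma_\bullet$, and by functoriality of $F$ (Definition~\ref{def:sheaf}), every composition of sheaf maps collapses to
\[
F(\tau_{k+1}\le\gamma_{k+2})\circ F(\pi_k\le\tau_{k+1})\ =\ F(\pi_k\le\gamma_{k+2}),
\]
independent of $\tau_\bullet$. Factoring this common term out, the remaining coefficient is $\sum_{\tau_\bullet}[\pi_\bullet:\tau_\bullet][\tau_\bullet:\gamma_\bullet]$, which vanishes by property~2 of the signed incidence relation. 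An analogous but simpler calculation, using the corresponding sign cancellation on the two faces of each $1$-simplex of $K(\Pi)$, gives $\eta^0\circ\alpha=0$.

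For (c), the key geometric observation is that $\sigma$ is the minimum of $\St\sigma$, so the order complex $K(\St\sigma)$ is a simplicial cone with apex $(\sigma)$. Fixing the standard simplicial signed incidence $[\pi_\bullet:\tau_\bullet]=(-1)^i$, where $i$ is the position of the inserted element (any valid choice is equivalent up to a sign gauge), I would exhibit an explicit chain contraction $h^k_\sigma:I^k(\sigma)\to I^{k-1}(\sigma)$ (with $I^{-1}(\sigma):=F(\sigma)$) defined on the $\pi_\bullet$-summand as follows: if $\pi_0=\sigma$, map by the identity to the $(\pi_1<\cdots<\pi_k)$-summand of $I^{k-1}(\sigma)$; if $\pi_0>\sigma$, map to zero. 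The main obstacle is then to verify the chain-homotopy identity
\[
\eta^{k-1}\circ h^k\ +\ h^{k+1}\circ\eta^k\ =\ \id_{I^k(\sigma)}
\]
together with the boundary cases $h^0\circ\alpha=\id_{F(\sigma)}$ and $\alpha\circ h^0+h^1\circ\eta^0=\id_{I^0(\sigma)}$. This is a careful sign bookkeeping: for each $\pi_\bullet$, every term of $h^{k+1}\eta^k$ arising from inserting some element at position $i$ of $\pi_\bullet$ pairs with the term of $\eta^{k-1}h^k$ obtained by inserting the same element at position $i-1$ of the truncated chain $(\pi_1,\ldots,\pi_k)$, and the signs $(-1)^i$ and $(-1)^{i-1}$ cancel. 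The single unpaired contribution is the identity, produced either by the ``insert $\sigma$ at position $0$'' case (when $\pi_0>\sigma$) or by the diagonal summand (when $\pi_0=\sigma$). Once the contraction is established, exactness at every $\sigma$ follows from the standard argument that any cocycle $x$ satisfies $x=\eta^{k-1}(h^k x)$, and the injectivity of $\alpha$ is immediate from $h^0\circ\alpha=\id_{F(\sigma)}$.
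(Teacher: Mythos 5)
Your proposal is correct, and it takes a genuinely different route from the paper. The paper also reduces to pointwise exactness over each $\sigma\in\Pi$, but then identifies the stalk complex with the compactly supported cochain complex of an auxiliary sheaf $T(F\vert_{\St\sigma})$ on the order complex $K(\St\sigma)$, and computes its cohomology by pushing a second injective resolution $J^\bullet$ of $F\vert_{\St\sigma}$ forward to a point: $R^jf_\ast T(J^\bullet)$ is the cohomology of $J^\bullet(\sigma)$, which vanishes for $j>0$ by exactness of $J^\bullet$. That argument leans on the derived-functor machinery of Section~\ref{sec:derived-functors}, on the exactness and injectivity-preservation of the functor $T$, and on the prior existence of some injective resolution of $F\vert_{\St\sigma}$. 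You instead build an explicit contracting homotopy out of the observation that $K(\St\sigma)$ is a cone with apex $(\sigma)$; I checked your pairing of terms (insertion at position $i$ of $\pi_\bullet$ against insertion at position $i-1$ of the truncated chain, with the crucial point that every bottom insertion surviving in $I^\bullet(\sigma)$ involves an element $\rho\ge\sigma$ and hence comparable to $\pi_0$), and the identity $\eta^{k-1}h^k+h^{k+1}\eta^k=\id$ together with the boundary cases does hold. Your route is self-contained and elementary, proves injectivity of $\alpha$ and exactness in all degrees in one uniform computation, and does not presuppose classical existence results; the paper's route is shorter on the page because it outsources the acyclicity to standard sheaf cohomology facts it needs elsewhere anyway. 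One small caution: you fix the standard incidence numbers $[\pi_\bullet:\tau_\bullet]=(-1)^i$ and remark that any valid choice is equivalent up to a sign gauge, but the paper's definition of a signed incidence relation is weak enough to admit degenerate choices (e.g., identically zero) for which the theorem is false; some normalization such as yours is genuinely required, and the paper's own proof also implicitly assumes the standard signs when it identifies the stalk complex with a simplicial cochain complex, so this is a defect of the surrounding definition rather than of your argument.
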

\begin{proof}
By construction, each sheaf $I^j$ is injective, and each map $\eta^j$ (as well as $\alpha$) is a natural transformation. It remains to show that the sequence is an exact chain complex. It is enough to show that for each $\pi\in\Pi$, the sequence $0\rightarrow F(\pi)\xrightarrow{\alpha(\pi)}I^0(\pi)\xrightarrow{\eta^0(\pi)}I^1(\pi)\xrightarrow{\eta^1(\pi)}\cdots$ is exact. 

We first define a functor $T$ from the category of sheaves on $\Pi$ to the category of sheaves on $K(\Pi)$. To each sheaf $F$ on $\Pi$, let $T(F)$ be the sheaf on $K(\Pi)$ defined by associating to each chain $\tau_\bullet=\tau_0<\cdots<\tau_k$ the `terminal' vector space: $T(F)(\tau_\bullet) \coloneqq F(\tau_k)$ and $T(F)(\tau_\bullet\le\gamma_\bullet)=F(\tau_k\le \gamma_j)$ for $\tau_\bullet \in K^k(\Pi)$, $\gamma_\bullet\in K^j(\Pi)$. Because $T(\eta)(\tau_\bullet) \coloneqq \eta(\tau_k):T(F)(\tau_\bullet)\rightarrow T(G)(\tau_\bullet)$ for any natural transformation $\eta:F\rightarrow G$, it is clear that $T$ is an exact functor. 

Notice that $0\rightarrow F(\pi)\xrightarrow{\alpha(\pi)}I^0(\pi)\xrightarrow{\eta^0(\pi)}I^1(\pi)\xrightarrow{\eta^1(\pi)}\cdots$ is identical to the compactly supported cochain complex of the sheaf $T(F\vert_{\St\pi})$ on the simplicial complex $K(\St\pi)$ \cite[Definition 6.2.1 and Definition 6.2.3]{Curry2014}. Therefore, exactness in $I^0(\pi)$ follows from \[
    \ker\eta_0(\pi) 
    \cong \Gamma(T(F\vert_{\St\pi}))
    \cong F(\pi)
    \cong \im \alpha(\pi),
\] and it remains to prove a vanishing property for the cohomology of $T(F\vert_{\St\pi})$, namely that $H^j(K(\St\pi);T(F\vert_{\St\pi}))=0$ for $j>0$. 

Let $J^\bullet$ be an injective resolution of the sheaf $F\vert_{\St\pi}$ on the poset $\St\pi$. Because $T$ is an exact functor (and maps injective sheaves to injective sheaves), $T(J^\bullet)$ is an injective resolution of $T(F\vert_{\St\pi})$. Let $f:K(\St\pi)\rightarrow \text{pt}$. Then $R^jf_\ast(T(J^\bullet))\cong H^j(K(\St\pi);T(F\vert_{\St\pi}))$ (see Section \ref{sec:derived-functors}). Moreover, $R^jf_\ast (T(J^\bullet))$ is isomorphic to the $j$-th cohomology group of the complex of vector spaces $J^\bullet(\pi)$, which, by the exactness of $J^\bullet$, is zero for $j>0$. 
\end{proof} 

\subsection{Minimal injective resolutions via inductive algorithm} \label{sec:algo_minimal}

We first describe an explicit construction of the minimal injective hull of a sheaf $F$ on a poset $\Pi$, and then give an algorithm to inductively compute the minimal injective resolution with the minimal injective hull as the input. Lastly, we focus on constant sheaves, give an example, and analyze complexity of the algorithm.

\paragraph*{Minimal injective hull.}
To construct the minimal injective hull of $F$, we first find the space of maximal vectors $M_F(\pi)$ for each $\pi\in\Pi$ (see Definition \ref{def:maximal-vectors}). Then $M_F$, with zero linear maps, is a subsheaf of $F$. Recalling the notation described below Definition \ref{def:indecomposable_injective_sheaf}, we define\[
I^0 = \bigoplus_{\pi\in\Pi}[\pi]^{M_F(\pi)} \cong \bigoplus_{\pi\in\Pi}[\pi]^{\dim M_F(\pi)},
\]
where $[\pi]^{M_F(\pi)}$ is the injective sheaf with $[\pi]^{M_F(\pi)}(\sigma)=M_F(\pi)$ if $\sigma\leq\pi$, and $0$ otherwise. We can naturally include $M_F \xrightarrow{\gamma} I^0$, and extend this inclusion to $F\xrightarrow{\alpha} I^0$, using the injectivity of $I^0$. We choose the extension $\alpha=\sum_{\pi\in\Pi}\alpha_{\pi}$, where $\alpha_\pi$ is the extension of $\Proj_{[\pi]^{M_F(\pi)}}\!\circ\,\gamma$ to $F$ that we describe in the proof of Lemma~\ref{lem:elementary_injective_sheaf}. That is,
\begin{align*}
    \alpha_{\pi}(\sigma) \coloneqq
    \begin{cases}
        \Proj_{M_F(\pi)} \circ F(\sigma\leq \pi) &\text{ if $\sigma\leq \pi$,} \\
        0 &\text{ otherwise.}
    \end{cases}
\end{align*}

\begin{proposition}\label{prop:construction_of_minimal_injective_hull}
    This construction yields the minimal injective hull $F\xrightarrow{\alpha} I^0$.
\end{proposition}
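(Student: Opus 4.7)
The plan is to verify that $\alpha$ is a well-defined injective natural transformation into the injective sheaf $I^0$, then identify the maximal vectors in $I^0$ as the images of maximal vectors in $F$, and finally invoke Corollary~\ref{cor:minimal_injective_hull} to conclude minimality. Injectivity of $I^0$ is immediate from Lemma~\ref{lem:coker-injective} and Lemma~\ref{lem:elementary_injective_sheaf}, since $I^0$ is by definition a direct sum of indecomposable injectives.

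First I would check that $\alpha$ is a natural transformation. For $\sigma\le\tau$, the map $I^0(\sigma\le\tau)$ acts on the decomposition $I^0(\sigma)=\bigoplus_{\pi\ge\sigma}M_F(\pi)$ as the identity on summands indexed by $\pi\ge\tau$ and as zero on summands indexed by $\pi\not\ge\tau$. Expanding both $I^0(\sigma\le\tau)\circ\alpha(\sigma)$ and $\alpha(\tau)\circ F(\sigma\le\tau)$ on a vector $x\in F(\sigma)$ yields $\sum_{\pi\ge\tau}\Proj_{M_F(\pi)}(F(\sigma\le\pi)(x))$ in both cases (using the composition law $F(\tau\le\pi)\circ F(\sigma\le\tau)=F(\sigma\le\pi)$), so the naturality squares commute.

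Next, I would prove that $\alpha(\sigma)$ is injective for every $\sigma$. Here the key point is finiteness of $\Pi$: given any nonzero $x\in F(\sigma)$, pick a $\pi\ge\sigma$ which is maximal (in $\Pi$) among those with $F(\sigma\le\pi)(x)\ne 0$. Then $F(\sigma\le\pi)(x)\in M_F(\pi)$ by the choice of $\pi$ and the characterization of $M_F$ via covering relations (Definition~\ref{def:maximal-vectors}), so $\Proj_{M_F(\pi)}(F(\sigma\le\pi)(x))=F(\sigma\le\pi)(x)\ne 0$, hence $\alpha(\sigma)(x)\ne 0$. This is the place where a little care is needed: one must fix the complements defining the projections consistently, but any choice works since we only use that $\Proj_{M_F(\pi)}$ restricts to the identity on $M_F(\pi)$.

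Finally, I would identify the maximal vectors of $I^0(\sigma)$. Writing $s=\sum_{\pi\ge\sigma}s_\pi$ with $s_\pi\in M_F(\pi)$, the computation of $I^0(\sigma\le\tau)(s)=\sum_{\pi\ge\tau}s_\pi$ for covering relations $\sigma<_1\tau$ forces $s_\pi=0$ for all $\pi>\sigma$, so $M_{I^0}(\sigma)=M_F(\sigma)$ (the $\sigma$-summand). For any such $s\in M_F(\sigma)$, we have $F(\sigma\le\pi)(s)=0$ for $\pi>\sigma$ (as $s$ is maximal in $F(\sigma)$), and $\Proj_{M_F(\sigma)}(s)=s$, so $\alpha(\sigma)(s)=s$. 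Thus every maximal vector in $I^0(\sigma)$ lies in $\im\alpha(\sigma)$, and Corollary~\ref{cor:minimal_injective_hull} concludes the proof. The main obstacle among these steps is the injectivity argument, since it is the only one that uses more than bookkeeping with the direct-sum decomposition — but finiteness of $\Pi$ makes the maximality argument straightforward.
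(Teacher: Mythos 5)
Your proposal is correct and follows essentially the same route as the paper: it establishes injectivity of $\alpha$ by observing that every nonzero vector maps under the sheaf maps to a nonzero maximal vector (your explicit ``pick $\pi$ maximal among those with $F(\sigma\le\pi)(x)\ne 0$'' argument is precisely the justification the paper leaves implicit), identifies $M_{I^0}(\sigma)=M_F(\sigma)\subseteq\im\alpha(\sigma)$, and invokes Corollary~\ref{cor:minimal_injective_hull}. The only cosmetic difference is that you verify naturality of $\alpha$ by direct computation, whereas the paper obtains it for free from the extension construction in Lemma~\ref{lem:elementary_injective_sheaf}.
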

\begin{proof}
We claim that $\alpha$ is injective. Let $u\in \ker\alpha(\sigma)$. Then\[
   0 = \alpha(\sigma)(u) = \sum_{\pi\in\Pi}\alpha_\pi(\sigma)(u) = \sum_{\sigma\leq\pi}\Proj_{M_F(\pi)} F(\sigma\leq \pi)(u),
\]
which is equivalent to $\Proj_{M_F(\pi)} F(\sigma\leq \pi)(u)=0$ for every $\pi\geq\sigma$, since the images of different $\alpha_{\pi}$ only intersect in $0$. But this means that $u=0$, because every non-zero vector is either maximal or maps onto some non-zero maximal vector via the sheaf maps.

By Corollary~\ref{cor:minimal_injective_hull}, the minimality of the injective hull is equivalent to the condition that every maximal vector of $I^0$ is in $\im\alpha$. This is satisfied, as $M_F(\pi)$ are exactly the maximal vectors in $I^0(\pi)$.
\end{proof}

We give an explicit formulation of an algorithm computing $\alpha(\pi)$ as Algorithm~\ref{algo:injective_hull_alpha}. We first fix bases in $F$. For each $\pi\in\Pi$, we fix a basis $B(\pi)=(v_1,\dots,v_l,w_{l+1},\dots,w_{l+k})$, with $l,k$ dependent on $\pi$, such that $(w_{l+1},\dots,w_{l+k})$ is a basis of $M_F(\pi)$, which we also use for $I^0(\pi)$. We assume that all maps $F(\pi\leq\sigma)$ are expressed with respect to those bases.

\begin{algorithm}[ht]
\caption{Minimal injective hull}\label{algo:injective_hull_alpha}
\hspace*{\algorithmicindent} \textbf{Input:} $F$ with fixed bases as described above, $\pi\in\Pi$ \\
\hspace*{\algorithmicindent} \textbf{Output:} $\alpha(\pi)$ as a $\left(\sum_{\sigma\leq\pi}\dim M_F(\sigma)\right)\times(\dim F(\pi))$ matrix
\begin{algorithmic}[1]
    \Procedure{Incl}{$\sigma$, $w\in M_F(\sigma)$}
        \State return inclusion of $w$ into $\bigoplus_{\pi<\tau}M_F(\tau)$ \Comment{just adding extra zeros}
    \EndProcedure
    \For{$v_i\in\{v_1,\dots,v_l\}$} \Comment{$(v_1,\dots,v_l,w_{l+1},\dots,w_{l+k})$ is the fixed basis of $F(\pi)$}
        \State $D\gets$ empty dictionary \Comment{keys: elemets $\pi\in\Pi$, values: vectors in $F(\pi)$}
        \State $D[\pi]\gets v_i$
        \State $u_i \gets 0$ \Comment{vector of length $\sum_{\pi<\tau}\dim M_F(\tau)$}
        \ForEach{$\sigma\geq\pi$ in some topological ordering}
            \If{$\sigma\in\textrm{Keys}(D)$ and $D(\sigma)\neq 0$}
                \State $w\gets D[\sigma]$
                \Comment{$D[\sigma] = F(\pi\leq\sigma)(v_i)$}
                \ForEach{$\tau>_1\sigma$}
                    \If{$\tau\not\in\mathrm{Keys}(D)$}
                        \State $D[\tau]\gets F(\sigma\leq\tau)(w)$
                        \State $u_i\gets u_i + \textsc{Incl}(\sigma,\, \Proj_{M_F(\tau)}(D[\tau]))$
                    \EndIf
                \EndFor
            \EndIf
            \State clear $D[\sigma]$ \Comment{optional, just to free up memory}
        \EndFor
        \State return a block matrix
            $\begin{pmatrix}%
                U & 0 \\%
                0 & I %
            \end{pmatrix}$,
            where $U=(u_1|\dots|u_l)$, \newline \hspace*{5.5cm} and $I$ is the identity matrix of order $k=\dim M_F(\pi)$
    \EndFor
\end{algorithmic}
\end{algorithm}

To express $\alpha(\pi)$ with respect to the fixed bases, we need to find the image of each $v_1,\dots,v_l, w_{1+l},\dots,w_{l+k}$. The maximal vectors $w_i$ are mapped identically to $M_F(\pi)\subseteq I_0(\pi)$. For the other vectors, $v_i$, we need to find $u_i\coloneqq \sum_{\pi<\tau}\Proj_{M_F(\tau)}F(\pi\leq\tau)(v_i)$. The algorithm does that while avoiding redundant computations. Each $\sigma$ is added to $D$ at most once. If it is added, then $D[\sigma]=F(\pi\leq\sigma)(v_i)$, and $\Proj_{M_F(\pi)}F(\pi\leq\sigma)(v_i)$ is added to $u_i$. If $\sigma$ is never added to $D$. then $F(\pi\leq\sigma)(v_i)=0$. In the end, $u_i$ contains the desired sum.

\paragraph*{Minimal injective resolution.}
Next, we describe an algorithm that takes $I^{k-1} \xrightarrow{\eta^{k-1}} I^k$ as the input, and gives $I^k \xrightarrow{\eta^k} I^{k+1}$ on the output. Essentially the same algorithm can be also used to compute $I^0 \xrightarrow{\eta^0} I^{1}$ from $F\xrightarrow{\alpha}I^0$, with the difference that $\alpha$ can not be stored the same way we store $\eta^k$---we discuss this in more detail later.

\paragraph*{Representing the sheaves and the maps.} We represent each sheaf $I^k$ as a tuple of poset elements, $(\pi_1,\dots,\pi_l)$, with possible repetitions, such that $I^k=\bigoplus_{i=1}^l[\pi_i]$. We refer to the elements in that tuple as \emph{generators} of $I^k$. We describe the natural transformation $\eta^k: I^k \rightarrow I^{k+1}$ by a matrix with columns labeled by the tuple of generators of $I^k$, and rows labeled by the tuple of generators of $I^{k+1}$, as discussed in Section~\ref{sec:maps}. In the following, we abuse the notation and denote matrices by the same symbols as the maps they represent. The linear map $\eta^k(\sigma)$ is described by a submatrix \[\eta^k(\sigma)\coloneqq\eta^k[\St\sigma,\St\sigma],\] where we take all the rows and columns that are labeled by simplices from $\St\sigma$. Note that Lemma~\ref{lem:maps_between_injective_sheaves} also implies that $\eta^k[\St\sigma,\Pi\setminus\St\sigma]$ is a zero matrix. Hence, $\eta^k[\St\sigma,\St\sigma]$ and $\eta^k[\St\sigma,\Pi]$ only differ by zero-columns.

\paragraph*{The algorithm going from $\xrightarrow{\eta^{k-1}}I^k$ to $\xrightarrow{\eta^{k}}I^{k+1}$.} The construction of $I^{k+1}$ and $\eta^k$ is described by Algorithm~\ref{algo:injective_resolution_tail}. We start with an empty sheaf and an empty matrix. Then we inductively add generators to $I^{k+1}$ and corresponding rows to $\eta^k$.
Fix a total order, $(\sigma_1,\dots,\sigma_n)$, extending the poset $\Pi$.
We go through the elements in reverse, and for each $\sigma$, we make sure that $\ker\eta^k(\sigma)= \im\eta^{k-1}(\sigma)$. We do that by adding new linearly independent rows to $\eta^k(\sigma)$ from $\big(\im\eta^{k-1}(\sigma)\big)^{\perp}=\left\{v\in k^{\dim I^{k}(\sigma)}\,\middle|\, \forall u\in\im\eta^{k-1}(\sigma): v\cdot u=0\right\}$. The entries of the new rows in positions outside of $\St\sigma$ are $0$. The new rows are labeled by $\sigma$, and for each added row we put a new element $\sigma$ in the tuple representing $I^{k+1}$.

\begin{algorithm}[h]
\caption{Step in the minimal injective resolution}\label{algo:injective_resolution_tail}
\hspace*{\algorithmicindent} \textbf{Input:} $I^k$, $\eta^{k-1}$ \\
\hspace*{\algorithmicindent} \textbf{Output:} $I^{k+1}$, $\eta^{k}$
\begin{algorithmic}[1]
    \For{$\sigma \in (\sigma_n,\sigma_{n-1},\dots,\sigma_1)$}
    \Comment{$(\sigma_1,\dots,\sigma_n)$ is a total order extending $\Pi$}
        \State $B \gets$ basis of $\big(\im(\eta^{k-1}(\sigma))\big)^{\perp}$ \Comment{see discussion of the algorithm}
        \ForAll{$b\in B$}
            \If{$b$ is linearly independent from the rows of $\eta^k(\sigma)$}
                \State append $\sigma$ to $I^{k+1}$
                \State add $b$ as the next row of $\eta\sb{k}$, labeled by $\sigma$
                
                \Comment{with $0$ entries for places labeled by $\Pi\setminus\St\sigma$}
            \EndIf
        \EndFor
    \EndFor
\end{algorithmic}
\end{algorithm}

\paragraph*{Correctness of Algorithm~\ref{algo:injective_resolution_tail}.}
We claim that starting with minimal injective hull of $F$, iterative application of the algorithm yields the minimal injective resolution of $F$. Each sheaf is injective by definition. We need to show exactness at each point, and minimality.

\begin{proposition}\label{prop:correctness_exactness}
    If $I^k,\eta^{k-1}$ is the input and $I^{k+1},\eta^{k}$ the output of Algorithm~\ref{algo:injective_resolution_tail}, then $I^{k-1}\xrightarrow{\eta^{k-1}} I^k \xrightarrow{\eta^k} I^{k+1}$ is exact in $I^k$.
\end{proposition}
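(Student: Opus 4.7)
My plan is to verify the pointwise identity $\ker\eta^k(\sigma)=\im\eta^{k-1}(\sigma)$ for every $\sigma\in\Pi$ by tracking the row space of the submatrix $\eta^k(\sigma)=\eta^k[\St\sigma,\St\sigma]$ throughout the algorithm. The key bookkeeping fact driving everything is that, since $(\sigma_1,\dots,\sigma_n)$ is a linear extension of $\Pi$, every label $\sigma'\in\St\tau$ is processed no later than $\tau$ in the reversed order, and every row added after the processing of $\tau$ carries a label outside $\St\tau$; hence the final $\eta^k(\tau)$ is frozen the moment the algorithm finishes processing $\tau$.

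The technical engine is a naturality observation: for $\sigma\ge\tau$, Lemma~\ref{lem:maps_between_injective_sheaves} identifies $I^k(\tau\le\sigma)$ with the coordinate projection $k^{\St\tau}\twoheadrightarrow k^{\St\sigma}$, and naturality of $\eta^{k-1}$ yields
\[
I^k(\tau\le\sigma)\circ\eta^{k-1}(\tau)=\eta^{k-1}(\sigma)\circ I^{k-1}(\tau\le\sigma).
\]
Consequently, every $u\in\im\eta^{k-1}(\tau)$ restricts, under the coordinate projection, to a vector $u|_{\St\sigma}\in\im\eta^{k-1}(\sigma)$.

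For $\im\eta^{k-1}(\tau)\subseteq\ker\eta^k(\tau)$, I note that each row of $\eta^k(\tau)$ is a vector $b_\sigma$ (with $\sigma\in\St\tau$) chosen by the algorithm inside $(\im\eta^{k-1}(\sigma))^\perp\subseteq k^{\St\sigma}$ and padded by zeros outside $\St\sigma$. Therefore $b_\sigma\cdot u = b_\sigma\cdot u|_{\St\sigma}=0$ for every $u\in\im\eta^{k-1}(\tau)$, by the observation above. For the reverse inclusion, I will argue that the final row space of $\eta^k(\tau)$ is exactly $(\im\eta^{k-1}(\tau))^\perp$: the rows already present in $\eta^k(\tau)$ at the start of processing $\tau$ have labels $\sigma'\in\St\tau\setminus\{\tau\}$ and, by the same orthogonality argument, already lie inside $(\im\eta^{k-1}(\tau))^\perp$; the algorithm then appends elements of a basis of $(\im\eta^{k-1}(\tau))^\perp$ until nothing further is linearly independent from the current rows, filling the row space out to all of $(\im\eta^{k-1}(\tau))^\perp$. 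Taking orthogonal complements in the finite-dimensional space $I^k(\tau)$ then gives $\ker\eta^k(\tau)=\im\eta^{k-1}(\tau)$.

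The main obstacle I anticipate is reconciling the ``local'' orthogonality chosen at each $\sigma$ (a row of length $|\St\sigma|$ orthogonal to $\im\eta^{k-1}(\sigma)$) with the ``global'' orthogonality needed at the smaller $\tau$; the coordinate-projection description of $I^k(\tau\le\sigma)$ together with the naturality square makes this compatibility automatic, and that is the one place where I would spell out the identifications with care.
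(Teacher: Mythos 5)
Your proposal is correct and follows essentially the same route as the paper's proof: both observe that $\eta^k(\tau)$ is frozen once $\tau$ is processed (because the linear extension guarantees all of $\St\tau$ is handled first), both use the zero-padding of rows together with naturality of $\eta^{k-1}$ to get the containment $\im\eta^{k-1}(\tau)\subseteq\ker\eta^k(\tau)$, and both conclude by showing the row space of the final $\eta^k(\tau)$ has the dimension of $(\im\eta^{k-1}(\tau))^{\perp}$. The only (cosmetic) difference is that you finish by taking the double orthogonal complement of the row space, whereas the paper finishes with a rank--nullity count; both are valid over an arbitrary field since the standard bilinear pairing on $k^n$ is nondegenerate.
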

\begin{proof}
We claim that $\im \eta^{k-1}(\sigma)=\ker \eta^k(\sigma)$ for every $\sigma\in\Pi$. Note that once we process an element $\pi$, the submatrix $\eta^k(\pi)$ does not change, because all elements $\tau\in\St\pi$ were already processed before $\pi$.

We analyze step $\sigma$. For all $\pi\in\St\sigma$, $\pi$ was already processed, and we assume that $\im\eta^{k-1}(\pi)=\ker\eta^k(\pi)$.
Since any row labeled by $\St\pi$ has only zero entries in columns labeled by simplices not in $\St\pi$, we start with $\ker\eta^k(\sigma) \supseteq \im\eta^{k-1}(\sigma)$. Adding new rows from $(\im\eta^{k-1}(\sigma))^{\perp}$ preserves this inclusion, and we keep adding new rows until
\begin{equation}\label{eq:algo_correctness_dimensions}
    \rank\eta^k(\sigma) = \dim (\im\eta^{k-1}(\sigma))^{\perp}.
\end{equation}
The dimension $\dim I^k(\sigma)$ is the number of columns of the matrix $\eta^k(\sigma)$, and also the number of rows of $\eta^{k-1}(\sigma)$. Rank-nullity theorem then implies
\begin{equation}\label{eq:algo_correctness_rank-nullity}
    \rank\eta^k(\sigma)+\dim\ker\eta^k(\sigma) = \dim I^k(\sigma) = \dim(\im\eta^{k-1}(\sigma))^{\perp} + \dim\im\eta^{k-1}(\sigma).
\end{equation}
Together, (\ref{eq:algo_correctness_dimensions}) and (\ref{eq:algo_correctness_rank-nullity}) imply that $\dim\ker\eta^k(\sigma) = \dim\im\eta^{k-1}(\sigma)$, and therefore $\ker\eta^k(\sigma)=\im\eta^{k-1}(\sigma)$ at the end of processing $\sigma$, as we claimed.
\end{proof}

To show the minimality, we apply the condition on maximal vectors from Theorem~\ref{thm:main-result}.

\begin{lemma}\label{lem:correctness}
   If $I^{k+1},\eta^k$ is the output of Algorithm~\ref{algo:injective_resolution_tail}, then for all $\sigma\in\Pi$ all maximal vectors in $I^{k+1}(\sigma)$ are in the image of $\eta^k(\sigma)$.
\end{lemma}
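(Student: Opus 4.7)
The plan is to identify the maximal vectors of $I^{k+1}(\sigma)$ explicitly from the direct-sum decomposition produced by the algorithm, and then use the algorithm's linear-independence check to lift each such vector through $\eta^k(\sigma)$. Writing $I^{k+1} = \bigoplus_i [\pi_i]$ with each $\pi_i$ a generator appended by Algorithm~\ref{algo:injective_resolution_tail}, the stalk decomposes as $I^{k+1}(\sigma) = \bigoplus_{\pi_i \ge \sigma} [\pi_i](\sigma)$. Since $I^{k+1}(\sigma \le \tau)$ acts as the identity on each $[\pi_i](\sigma)$ with $\pi_i \ge \tau$ and as zero otherwise, intersecting over all $\tau >_1 \sigma$ gives $M_{I^{k+1}}(\sigma) = \bigoplus_{\pi_i = \sigma} [\sigma](\sigma)$, a subspace whose dimension is precisely the number $m$ of rows the algorithm appends while processing $\sigma$.

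Next I would unpack the matrix structure of $\eta^k(\sigma) = \eta^k[\St\sigma, \St\sigma]$. Its rows split into those labeled by $\tau$ with $\tau > \sigma$ and $\tau \in \St\sigma$, appended during earlier iterations and enumerated as $r_1, \ldots, r_p$, and the newly appended $\sigma$-labeled rows $b_1, \ldots, b_m$. Every appended row passes the algorithm's linear-independence test against all previously appended rows, so by induction on the order of addition the full collection $\{r_1, \ldots, r_p, b_1, \ldots, b_m\}$ is linearly independent in $k^{\dim I^k(\sigma)}$. Hence the combined linear map
\[
\Phi \colon I^k(\sigma) \to k^{p+m}, \qquad v \mapsto (r_1 v, \ldots, r_p v,\, b_1 v, \ldots, b_m v),
\]
has rank $p+m$ and is surjective. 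Given any $u \in M_{I^{k+1}}(\sigma)$, its $r_i$-coordinates vanish and its $b_j$-coordinates form some $u_\sigma \in k^m$; surjectivity of $\Phi$ then produces $v \in I^k(\sigma)$ with $\Phi(v) = (0, u_\sigma)$, which unpacks to $\eta^k(\sigma) v = u$, establishing $u \in \im \eta^k(\sigma)$.

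The step requiring care is the joint linear independence of the $r_i$'s and $b_j$'s: it cannot be read off from any single iteration but must be assembled across all iterations processing elements of $\St\sigma$. This is ensured by the descending order of processing---when the iteration for $\sigma$ begins, all iterations for $\tau > \sigma$ with $\tau \in \St\sigma$ have already finished, so each $b_j$ is tested against precisely $\{r_1, \ldots, r_p\} \cup \{b_1, \ldots, b_{j-1}\}$. Once this observation is in place, the argument reduces to the standard fact that a linear map defined by a matrix of linearly independent rows is surjective onto the coordinate space.
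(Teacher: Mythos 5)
Your identification of the maximal vectors, $M_{I^{k+1}}(\sigma)=\Span(e_{p+1},\dots,e_{p+m})$ spanned by the coordinates of the rows appended while processing $\sigma$, is correct and matches the paper. The gap is in the central claim that the full collection $\{r_1,\dots,r_p,b_1,\dots,b_m\}$ of rows of $\eta^k(\sigma)$ is linearly independent: this is false in general, and in fact it fails already in the paper's Example~\ref{ex:4-simplex_with_extra_edges}. There, $\eta^0(\emptyset)$ is an $8\times 6$ matrix (eight rows in $k^6$), so its rows cannot be independent; even the $3\times 3$ submatrix $\eta^0(2)$ has rows $(1,-1,0)$, $(1,0,-1)$, $(0,1,-1)$, which satisfy $r_1-r_2+r_3=0$. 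The reason your induction breaks is at the base case: a row $r_i$ labeled by $\tau_i>\sigma$ was tested, at the moment it was appended, only against the rows of the \emph{smaller} submatrix $\eta^k(\tau_i)$ --- i.e., against rows labeled by elements of $\St\tau_i$, restricted to columns labeled by $\St\tau_i$. Rows appended at two incomparable cofaces of $\sigma$ are never tested against one another, so no joint independence of $r_1,\dots,r_p$ inside $\eta^k(\sigma)$ can be inferred. Consequently your map $\Phi$ need not be surjective onto $k^{p+m}$, and the lift of $(0,u_\sigma)$ does not follow.

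The conclusion can still be reached, but it requires using only the independence that the algorithm actually guarantees, namely that each newly appended row $b_j$ is independent of \emph{all} rows already present in $\eta^k(\sigma)$ (old and new alike). Writing $A_j$ for the matrix of the first $j$ rows of $\eta^k(\sigma)$, this gives a strict inclusion $\ker A_j\subsetneq\ker A_{j-1}$ for each index $j$ of a newly added row, hence a vector $u_j$ with $A_{j-1}u_j=0$ and $A_ju_j\ne 0$; then $\eta^k(\sigma)u_j$ vanishes in coordinates $1,\dots,j-1$ and is nonzero in coordinate $j$. These images form a triangular family whose span contains $\Span(e_{p+1},\dots,e_{p+m})=M_{I^{k+1}}(\sigma)$, which is exactly what the lemma asserts --- and at no point does one need the earlier rows $r_1,\dots,r_p$ to be independent of each other.
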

\begin{proof}
Maximal vectors over $\sigma$ are exactly the new vectors added at step $\sigma$. That is, if rows $i,\dots,l$ were added to $\eta^k(\sigma)$ at step $\sigma$, the space of maximal vectors is $\Span(e_i,\dots,e_l)$, with $e_j$ being the $j$-th canonical vector. We show that this space is in the image of $\eta^k(\sigma)$.

Let $A_j$ be the matrix consisting of the first $j$ rows of $\eta^k(\sigma)$. As we only add new rows when they are linearly independent from the previous, we have $\ker A_j\subsetneq \ker A_{j-1}$ for every $j\in\{i,\dots,l\}$. That is, there exists $u_j$ such that $A_{j-1} u_j = 0$, and $A_j u_j \neq 0$. This is only possible if $A_j u_j = \lambda e_j$ for some $\lambda\neq 0$, which means that $\eta^k(\sigma) \cdot u_j = A_l u_j$ is a vector with zeros at positions $1,\dots,j-1$, and a non-zero at the $j$-th coordinate. Therefore, $\im\eta^k(\sigma)
\supseteq \Span(\eta^k(\sigma)\cdot u_i, \dots,\eta^k(\sigma)\cdot u_l) 
= \Span(e_i,\dots,e_l)
= M_{I^{k+1}}(\sigma)$.
\end{proof}

\paragraph*{Computing the orthogonal complement.}

In Algorithm~\ref{algo:injective_resolution_tail}, we purposefully leave out any particular way how to compute the basis of $(\im(\eta^{k-1}(\sigma)))^{\perp}$ on line~2, as it is a standard computational problem and it can be implemented in many different ways. One way to compute it is via a standard row reduction algorithm: we start with $U\gets \text{identity matrix}$, $R\gets \eta^{k-1}(\sigma)$, and we reduce rows from top to bottom, reducing each by adding the rows above it to push the left-most non-zero as much to the right as possible. Every row operation performed on $R$ is also performed on $U$, so that $R=U\cdot\eta^{k-1}(\sigma)$. We end up with a lower-triangular matrix $U$ such that all its rows corresponding to the zero rows of $R$ form a basis of $(\im(\eta^{k-1}(\sigma)))^{\perp}$.

An immediate advantage of this approach is that we only ever work with rows of $\eta^{k-1}(\sigma)$. This means we can represent the matrices $\eta^k$ in a row-wise sparse representation, e.g., a list of rows, each represented as a ``\textit{column index} $\rightarrow$ \textit{value}'' dictionary. In this representation, when we go from $\eta^k$ to $\eta^k(\sigma)$, we just choose all the rows labeled by $\St\sigma$---there is no need to crop the rows themselves, as all entries not labeled by $\St\sigma$ are $0$.



\paragraph*{From $F\xrightarrow{\alpha} I^0$ to $I^0\xrightarrow{\eta^0}I^1$.}
In Algorithm~\ref{algo:injective_resolution_tail}, we only use $\eta^{k-1}$ to extract the submatrix $\eta^{k-1}(\sigma)$ and compute the orthogonal complement of its image on line~2. For $k=0$, the inclusion $F\xrightarrow{\alpha} I^0$ plays the role of $\eta^{k-1}$. We can still use the same algorithm, if instead of one matrix $\eta^{k-1}$, we give $\alpha(\sigma)$ for each $\sigma$ as a part of the input. This can be realised for example by calling Algorithm~\ref{algo:injective_hull_alpha} on line 2.

\paragraph*{Constant sheaf.}
For the constant sheaf, the construction of the minimal injective hull $k_{\Pi}\xrightarrow{\alpha} I^0$ is very straightforward. The generators of $I^0$ are the maximal elements of $\Pi$, each exactly once. The map $\alpha$ is given by the diagonal embedding
\begin{align*}
    \alpha(\sigma): k_{\Pi}(\sigma) &\longrightarrow I^0(\sigma) \\
    1 &\mapsto (1,\dots, 1)^T.
\end{align*}
Conveniently, we can represent this particular injection $\alpha$ the same way we represent $\eta^k$. We define $\eta^{-1}$ as a column of ones with rows labeled by the generators of $I^0$, i.e., by the maximal elements of $\Pi$. We can then run Algorithm~\ref{algo:injective_resolution_tail} with no modifications on input $I^0$, $\eta^{-1}$.

\subsection{Examples}

We demonstrate how Algorithm~\ref{algo:injective_resolution_tail} works for constant sheaves with two examples.

\begin{example}\label{ex:4-simplex_with_extra_edges}
Consider the $3$-skeleton of the $4$-simplex, with two extra edges attached to vertex~$1$. We compute the minimal injective resolution of $\Pi\coloneqq\St(1)$. We describe simplices as lists of vertices, and for brevity omit the vertex $1$---e.g., $234=\{1,2,3,4\}$. See Figure~\ref{fig:4-simplex_with_extra_edges_poset}.

\begin{figure}[htb]
    \centering
\begin{tikzcd}[row sep = large]
    & 234 & 235 & 245 & 345 &  &  & \\
    23 \ar[ru]\ar[rru] & 
    24 \ar[u, crossing over]\ar[rru, crossing over]& 
    25 \ar[u, crossing over]\ar[ru, crossing over]& 
    34 \ar[llu, crossing over]\ar[ru, crossing over]& 
    35 \ar[llu, crossing over]\ar[u, crossing over]& 
    45 \ar[llu, crossing over]\ar[lu, crossing over]&  & \\
    & 
    2 \ar[lu]\ar[u]\ar[ru] & 
    3 \ar[llu, crossing over]\ar[ru]\ar[rru] &
    4 \ar[llu, crossing over]\ar[u, crossing over]\ar[rru] &
    5 \ar[llu, crossing over]\ar[u, crossing over]\ar[ru] & & 6 & 7 \\
    &  &  &  &  & \emptyset \ar[llllu] \ar[lllu] \ar[llu] \ar[lu] \ar[ru] \ar[rru] &  & \\
\end{tikzcd}
    \caption{The poset considered in Example~\ref{ex:4-simplex_with_extra_edges}. We omit vertex $1$ from the labels.}
    \label{fig:4-simplex_with_extra_edges_poset}
\end{figure}

\begin{figure}[htb]
    \centering
{
    \small
    \setlength{\arraycolsep}{1.5pt}
    \newcommand{\frm}[1]{\colorbox{pink}{\makebox[1pt]{#1}}}
    \[
    \begin{NiceArray}{cccccccc}
        &&&\makebox[0pt]{$I^0$}&&&& \\
        & \frm{1} & \frm{1} & \frm{1} & \frm{1} &  &  &   \\
        2 & 2 & 2 & 2 & 2 & 2 &  &   \\
        & 3 & 3 & 3 & 3 &  & \frm{1} & \frm{1}  \\
        &  &  &  &  & 6 &  &   
    \end{NiceArray}
    \ \xrightarrow{\eta^0}\ 
    \begin{NiceArray}{cccccccc}
        &&&\makebox[0pt]{$I^1$}&&&& \\
        & 0 & 0 & 0 & 0 &  &  &   \\
        \frm{1} & \frm{1} & \frm{1} & \frm{1} & \frm{1} & \frm{1} &  &   \\
        & 3 & 3 & 3 & 3 &  & 0 & 0  \\
        &  &  &  &  & \makebox[0pt]{6+\frm{2}} &  &     
    \end{NiceArray}
    \ \xrightarrow{\eta^1}\ 
    \begin{NiceArray}{cccccccc}
        &&&\makebox[0pt]{$I^2$}&&&& \\
        & 0 & 0 & 0 & 0 &  &  &   \\
        0 & 0 & 0 & 0 & 0 & 0 &  &   \\
        & \frm{1} & \frm{1} & \frm{1} & \frm{1} &  & 0 & 0  \\
        &  &  &  &  & 4 &  &      
    \end{NiceArray}
    \ \xrightarrow{\eta^2}\ 
    \begin{NiceArray}{cccccccc}
        &&&\makebox[0pt]{$I^3$}&&&& \\
        & 0 & 0 & 0 & 0 &  &  &   \\
        0 & 0 & 0 & 0 & 0 & 0 &  &   \\
        & 0 & 0 & 0 & 0 &  & 0 & 0  \\
        &  &  &  &  & \frm{1} &  &    
    \end{NiceArray}
    \]
}
    
    \caption{The resolution in Example~\ref{ex:4-simplex_with_extra_edges}. The numbers indicate the dimension at each simplex, positioned as in Figure~\ref{fig:4-simplex_with_extra_edges_poset}. The pink background indicates the generators. For example, in $I^1(\emptyset)$ we have 6 dimensions coming from generators above $\emptyset$, and we have 2 more generators at $\emptyset$.}
    \label{fig:4-simplex_with_extra_edges_resolution}
\end{figure}

The generators of $I^0$ are the maximal simplices $(234, 235, 245, 345, 6, 7)$, and $\eta^{-1}: k_{\Pi}\rightarrow I^0$ is the diagonal embedding for each $\pi\in\Pi$. We construct $I^1$ and $\eta^0$ as in Algorithm~\ref{algo:injective_resolution_tail}, with inputs $I^0,\eta^{-1}$. Initialize $I^1$ and $\eta^0$ empty, and go through the simplices row-by-row left-to-right as they are in Figure~\ref{fig:4-simplex_with_extra_edges_poset}. Starting with $234$, the space $I^0(234)$ is 1-dimensional and equal to $\im\eta^{-1}(234)$, so there is nothing to be added, and $\eta^0(234)=0$. The same happens for all the maximal simplices.

At triangle $23$, we have $I^0(23)=k^2$, since two generators are above $23$. At the moment, $\eta^0(23)$ is empty, so its kernel is $k^2$. We need $\ker\eta^0(23)=\im\eta^{-1}(23)=\Span\{(1,1)\}$. The orthogonal complement of $\im\eta^{-1}(23)$ is generated by the vector $(1,-1)$. We add it as a new row in $\eta^0(23)$. Therefore, we add $23$ to $I^1$, and add a first row to $\eta^0$; see Figure~\ref{fig:4-simplex_with_extra_edges_matrix}. Similarly, we add one row for each other triangle.

Now for the edges. We have $I^0(2)=k^3$, and $\eta^0(2)$ a $3\times 3$ matrix, highlighted as a green solid rectangle in Figure~\ref{fig:4-simplex_with_extra_edges_matrix}. 
We already have $\ker\eta^0(2)=\Span\{(1,1,1)\}=\im\eta^{-1}(2)$, so we do not add any new generators over $2$. The same goes for $\eta^0(3), \eta^0(4), \eta^0(5)$, each of which you can see highlighted in Figure~\ref{fig:4-simplex_with_extra_edges_matrix} with a different color and line style.

Finally, we get to the vertex $\emptyset$, with $\eta^0(\emptyset)$ starting as the part of the matrix in Figure~\ref{fig:4-simplex_with_extra_edges_matrix} above the horizontal line. Its rank is $3$, and its nullity is $3$. We need the kernel to be $1$-dimensional, so we need to add two additional rows from $(\Span\{(1,1,1,1,1,1)\})^\perp$. We also add $\emptyset$ to $I^1$ twice. This completes the construction of $I^1$ and $\eta^0$.

The resolution goes on for two more steps: $I^2$ is generated by $(2,3,4,5)$, $I^3$ by $(\emptyset)$. The matrices $\eta^k$ are in Figure~\ref{fig:4-simplex_with_extra_edges_matrix}, and the whole resolution is schematically shown in Figure~\ref{fig:4-simplex_with_extra_edges_resolution}.

\begin{figure}[htb]
    \centering
    \begin{minipage}{.47\textwidth}
\[
\begin{pNiceArray}[first-row,first-col, margin=.5em]{rrrrrr}
    \eta_0  & 234  & 235  & 245  & 345  & 6  & 7  \\
    23        & 1    & -1   & 0    & 0    & 0  & 0  \\
    24        & 1    & 0    & -1   & 0    & 0  & 0  \\
    25        & 0    & 1    & -1   & 0    & 0  & 0  \\
    34        & 1    & 0    & 0    & -1   & 0  & 0  \\
    35        & 0    & 1    & 0    & -1   & 0  & 0  \\
    45        & 0    & 0    & 1    & -1   & 0  & 0  \\
    \hline
    \emptyset & 0    & 0    & 0    & -1   & 1  & 0  \\
    \emptyset & 0    & 0    & 0    & 0    & -1 & 1
\CodeAfter
    \begin{tikzpicture}
        \node () [fit=(2-1) (2-1), draw=magenta!80!black, rounded corners] {};
        \node () [fit=(2-3) (2-4), draw=magenta!80!black, rounded corners] {};
        \node () [fit=(4-1) (4-1), draw=magenta!80!black, rounded corners] {};
        \node () [fit=(4-3) (4-4), draw=magenta!80!black, rounded corners] {};
        \node () [fit=(6-1) (6-1), draw=magenta!80!black, rounded corners] {};
        \node () [fit=(6-3) (6-4), draw=magenta!80!black, rounded corners] {};
        
        \node () [fit=(3-2) (3-4), draw=red!80!black, dashed, thick] {};
        \node () [fit=(5-2) (6-4), draw=red!80!black, dashed, thick] {};
        
        \node () [fit=(1-1) (1-2), draw=blue!70!black, dotted, thick] {};
        \node () [fit=(1-4) (1-4), draw=blue!70!black, dotted, thick] {};
        \node () [fit=(4-1) (5-2), draw=blue!70!black, dotted, thick] {};
        \node () [fit=(4-4) (5-4), draw=blue!70!black, dotted, thick] {};
        
        \node () [fit=(1-1) (3-3), draw=green!70!black] {};
    \end{tikzpicture}
\end{pNiceArray}
\]
    \end{minipage}\hfill
    \begin{minipage}{.49\textwidth}
\[
\begin{pNiceArray}[first-row,first-col, margin=.5em]{rrrrrrrr}
    \eta_1 & 23 & 24 & 25 & 34 & 35 & 45 & \emptyset & \emptyset \\
    2      & 1  & -1 & 1  & 0  & 0  & 0  & 0  & 0 \\
    3      & 1  & 0  & 0  & -1 & 1  & 0  & 0  & 0 \\
    4      & 0  & 1  & 0  & -1 & 0  & 1  & 0  & 0 \\
    5      & 0  & 0  & 1  & 0  & -1 & 1  & 0  & 0
\CodeAfter
    \begin{tikzpicture}
        \node () [fit=(3-2) (3-2), draw=magenta!80!black, rounded corners] {};
        \node () [fit=(3-4) (3-4), draw=magenta!80!black, rounded corners] {};
        \node () [fit=(3-6) (3-6), draw=magenta!80!black, rounded corners] {};
        
        \node () [fit=(4-3) (4-3), draw=red!80!black, dashed, thick] {};
        \node () [fit=(4-5) (4-6), draw=red!80!black, dashed, thick] {};
        
        \node () [fit=(2-1) (2-1), draw=blue!70!black, dotted, thick] {};
        \node () [fit=(2-4) (2-5), draw=blue!70!black, dotted, thick] {};
        
        \node () [fit=(1-1) (1-3), draw=green!70!black] {};
    \end{tikzpicture}
\end{pNiceArray}
\]
\[
\begin{pNiceArray}[first-row,first-col, margin=.5em]{rrrr}
    \eta_2 & 2 & 3 & 4 & 5 \\
    \emptyset & \hphantom{-}1 & -1 & \hphantom{-}1 & -1
\end{pNiceArray}
\]
    \end{minipage}
    \caption{Matrices $\eta^0$, $\eta^1$, $\eta^2$ in Example~\ref{ex:4-simplex_with_extra_edges}, with highlighted submatrices $\eta^k(2)$ (solid green), $\eta^k(3)$ (dotted blue), $\eta^k(4)$ (rounded corners magenta), $\eta^k(5)$ (dashed red). Recall that $\eta^k(\sigma)=\eta^k[\St\sigma,\St\sigma]$, and note that if $\sigma\not\leq\tau$, then $\eta^k[\sigma,\tau]=0$.}
    \label{fig:4-simplex_with_extra_edges_matrix}
\end{figure}
\end{example}

\begin{example}
    Let $\Sigma\coloneqq\Dnk{3}{2}$ be the $2$-skeleton of a tetrahedron (whose geometric realization is homeomorphic to the sphere). We give the minimal injective resolution of the constant sheaf $k_{\Sigma}$ in Figure~\ref{fig:tetraheron_skeleton}.
    
    \begin{figure}[htb]
        \centering
        \begin{subfigure}{\textwidth}
            \centering
            \begin{tikzcd}[row sep = large]
                & 234 & 134 & 124 & 123 & \\
                34 \ar[ru]\ar[rru] &
                24 \ar[u,crossing over]\ar[rru,crossing over] &
                23 \ar[lu,crossing over]\ar[rru,crossing over] &
                14 \ar[lu,crossing over]\ar[u,crossing over] &
                13 \ar[llu,crossing over]\ar[u,crossing over] &
                12 \ar[llu,crossing over]\ar[lu,crossing over] \\
                & 
                4 \ar[lu]\ar[u]\ar[rru] & 
                3 \ar[llu,crossing over]\ar[u,crossing over]\ar[rru,crossing over] & 
                2 \ar[lu,crossing over]\ar[llu,crossing over]\ar[rru,crossing over] & 
                1 \ar[lu,crossing over]\ar[u,crossing over]\ar[ru,crossing over] &
            \end{tikzcd}
        \end{subfigure}
        
        \vspace{.5cm}
        \begin{subfigure}{\textwidth}
            \centering
            \small
            \setlength{\arraycolsep}{1.5pt}
            \newcommand{\frm}[1]{\colorbox{pink}{\makebox[1pt]{#1}}}
            \[
            \begin{NiceArray}{cccccc}
                &&&\makebox[0pt]{$I^0$}&& \\
                  & \frm{1} & \frm{1} & \frm{1} & \frm{1} & \\
                 2 & 2 & 2 & 2 & 2 & 2 \\
                  & 3 & 3 & 3 & 3 &
            \end{NiceArray}
            \ \xrightarrow{\eta^0}\ 
            \begin{NiceArray}{cccccc}
                &&&\makebox[0pt]{$I^1$}&& \\
                  & 0 & 0 & 0 & 0 & \\
                 \frm{1} & \frm{1} & \frm{1} & \frm{1} & \frm{1} & \frm{1} \\
                  & 3 & 3 & 3 & 3 &
            \end{NiceArray}
            \ \xrightarrow{\eta^1}\ 
            \begin{NiceArray}{cccccc}
                &&&\makebox[0pt]{$I^2$}&& \\
                  & 0 & 0 & 0 & 0 & \\
                 0 & 0 & 0 & 0 & 0 & 0 \\
                  & \frm{1} & \frm{1} & \frm{1} & \frm{1} &
            \end{NiceArray}
            \]
        \end{subfigure}
        \vspace{.5cm}
        \begin{subfigure}{.4\textwidth}
        \[\begin{pNiceArray}[first-row,first-col, margin=.5em]{rrrr}
            \eta^0 & 234 & 134 & 124 & 123 \\
            34 & -1 & 1 & 0 & 0 \\
            24 & -1 & 0 & 1 & 0 \\
            23 & -1 & 0 & 0 & 1 \\
            14 & 0 & -1 & 1 & 0 \\
            13 & 0 & -1 & 0 & 1 \\
            12 & 0 & 0 & -1 & 1 
        \end{pNiceArray}\]
        \end{subfigure}\hspace{1cm}
        \begin{subfigure}{.45\textwidth}
        \[\begin{pNiceArray}[first-row,first-col, margin=.5em]{rrrrrr}
            \eta^1 & 34 & 24 & 23 & 14 & 13 & 12 \\
            4 & 1 & -1 & 0 & 1 & 0 & 0 \\
            3 & 1 & 0 & -1 & 0 & 1 & 0 \\
            2 & 0 & 1 & -1 & 0 & 0 & 1 \\
            1 & 0 & 0 & 0 & 1 & -1 & 1
        \end{pNiceArray}\]
        \end{subfigure}
        
        \caption{Example~\ref{fig:tetraheron_skeleton}: the minimal injective resolution of the constant sheaf on $\Dnk{3}{2}\cong S^2$. First is $\Dnk{3}{2}$ as a poset, second the dimensions in the injective resolution with highlighted generators, as in Figure~\ref{fig:4-simplex_with_extra_edges_resolution}, and third the matrices describing the natural transformations.}
        \label{fig:tetraheron_skeleton}
    \end{figure}
\end{example}

\subsection{Complexity Analysis}

We analyze the complexity of finding the minimal injective resolution of the constant sheaf, $k_{\Pi}$, on a poset $\Pi$, with $n$ elements and height $d$, computed by an iterative application of Algorithm~\ref{algo:injective_resolution_tail}. That is, we start with $k_{\Pi}\xrightarrow{\eta^{-1}}I^0$ the minimal injective hull of the constant sheaf as described above, and then iteratively apply Algorithm~\ref{algo:injective_resolution_tail} until $I^k = 0$.

The body of the outer-most for loop in Algorithm~\ref{algo:injective_resolution_tail} consists of finding a basis of $(\im(\eta^{k-1}(\sigma)))^{\perp}$, and checking for linear independence of rows of $\eta^k(\sigma)$. Both of those operations can be computed in time at most $\mathcal{O}(c^3)$ with $c$ the maximum of the number of rows of $\eta^{k-1}(\sigma)$ and $\eta^{k}(\sigma)$. In our analysis we ignore the complexity of finding $\St\sigma$ to extract the submatrices from $\eta^k$ in the first place, since it is less expensive than $\mathcal{O}(c^3)$ when we estimate $c$ by the size of $\St\sigma$.

By Corollary~\ref{cor:length_of_resolution}, the length of the minimal injective resolution is at most $d+1$. Therefore, we find it in time $\mathcal{O}(d\cdot n\cdot c^3)$, where \[c=\max_{j,\sigma} \sum_{\pi\in\St\sigma} m^j_{k_{\Pi}}(\pi)\]
is the maximal number of generators over any star throughout the resolution.
This analysis is output-sensitive. To give complexity bounds dependent only on the input, we compare $c$ to the maximal size of a star in $\Pi$. How well we can approximate $c$ this way depends on the structure of $\Pi$.

\begin{definition}\label{def:star_complexity}
    For $\sigma\in\Pi$ we define\[
        m^j(\St\sigma)\coloneqq \sum_{\pi\in\St\sigma}m_{k_{\Pi}}^j(\pi)
    \]
    to be the number of generators over $\St\sigma$ in the $j$-th step of the minimal injective resolution of the constant sheaf on $\Pi$. Furthermore, we define the \emph{$j$-th star complexity} of $\sigma$ as \[
        \stcplx^j(\sigma)\coloneqq \frac{m^j(\St\sigma)}{\#\St\sigma}.
    \]
\end{definition}

For general posets, $\stcplx^j(\sigma)$ can be arbitrarily large even when lengths of chains are bounded, because sizes of boundaries and coboundaries can be arbitrarily large.
For simplicial complexes, we give an upper bound on $\stcplx^j(\sigma)$ depending on the dimension.

\begin{proposition}\label{prop:upper_bound_on_SC}
    Let $\Pi$ be a simplicial complex, $\sigma\in\Pi$ and $k\coloneqq \dim\Pi-\dim\sigma$. Then\[
        \stcplx^j(\sigma)\leq\binom{k}{j}.
    \]
    This bound is asymptotically tight. If $\Pi=\Dnk{n}{k}$ is the $k$-skeleton of the $n$-simplex, $v$ is a vertex in $\Dnk{n}{k}$, and $j$ is fixed, then \[
        \stcplx^j(v)\xrightarrow{n\rightarrow\infty}\binom{k}{j}.
   \]
\end{proposition}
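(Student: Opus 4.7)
The plan is to derive the bound from a cohomological reformulation of the multiplicities combined with a binomial double count, and to verify asymptotic tightness by computing the minimal injective resolution on $\Delta_n^{(k)}$ explicitly. For the upper bound, I first rewrite Theorem~\ref{thm:multiplicities} in terms of the link: using the homeomorphism $|\St\pi|\cong\R^{\dim\pi}\times|\St\pi^0|$ appearing in its proof (with $|\St\pi^0|$ being the open cone on the link $|\mathrm{lk}_\Pi(\pi)|$), the K\"unneth formula for compactly supported cohomology, and the standard identity $H_c^{i+1}(\text{open cone on }L,k)\cong \tilde H^i(L,k)$, one obtains
\[
m^j_{k_\Pi}(\pi) \ =\ \dim\tilde H^{j-1}(|\mathrm{lk}_\Pi(\pi)|,\, k).
\]
Since reduced cohomology is a subquotient of the simplicial cochains, this is at most the number of $(j-1)$-simplices of $\mathrm{lk}_\Pi(\pi)$, which equals $\#\{\tau\in\Pi : \pi\leq\tau,\ \dim\tau = \dim\pi + j\}$. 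Summing over $\pi\in\St\sigma$ and swapping the order of summation,
\[
m^j(\St\sigma)\ \leq\ \sum_{\tau\in\St\sigma}\#\{\pi:\sigma\leq\pi\leq\tau,\ \dim\pi=\dim\tau-j\}\ =\ \sum_{\tau\in\St\sigma}\binom{\dim\tau - \dim\sigma}{j},
\]
since each such $\pi$ corresponds to a choice of $\pi\setminus\sigma \subseteq \tau\setminus\sigma$ of size $\dim\tau - \dim\sigma - j$. Because $\dim\tau - \dim\sigma \leq k$ and $\binom{\cdot}{j}$ is monotone on the nonnegative integers, every summand is at most $\binom{k}{j}$, proving $\stcplx^j(\sigma)\leq\binom{k}{j}$.

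For asymptotic tightness, specialize to $\Pi = \Delta_n^{(k)}$ and $\sigma = v$ a vertex. For $\pi = v\cup\rho$ with $|\rho|=i$ and $0\leq i\leq k$, the link $\mathrm{lk}_\Pi(\pi)$ is the $(k-i-1)$-skeleton of an $(n-i-1)$-simplex, which is well known to be homotopy equivalent to a wedge of $\binom{n-i-1}{k-i}$ copies of $S^{k-i-1}$. Hence $m^j_{k_\Pi}(\pi)$ is nonzero only when $j = k-i$, in which case it equals $\binom{n-i-1}{k-i}$. Grouping simplices of $\St v$ by the size $i = k-j$ of $\rho$ (of which there are $\binom{n}{k-j}$) gives the closed form
\[
m^j(\St v) \ = \ \binom{n}{k-j}\binom{n-k+j-1}{j}\qquad (0\leq j\leq k),
\]
while $\#\St v = \sum_{i=0}^{k}\binom{n}{i} \sim n^k/k!$ as $n\to\infty$. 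The leading-order asymptotic $m^j(\St v)\sim n^k/((k-j)!\,j!)$ then yields $\stcplx^j(v)\to \frac{k!}{(k-j)!\,j!}=\binom{k}{j}$.

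The only step requiring genuine care is the link reformulation of Theorem~\ref{thm:multiplicities}; its derivation mirrors the argument already present in that theorem's proof, where the open star of $\sigma$ is fibered (via $\R^{\dim\sigma}$) over the open cone on the link. Once that formula is in hand, the binomial double count and the asymptotic estimate are routine calculations.
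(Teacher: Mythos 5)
Your proof is correct and follows essentially the same route as the paper: both deduce the upper bound from Theorem~\ref{thm:multiplicities} by bounding a cohomology dimension by the corresponding cochain dimension (your count of $(j-1)$-simplices of $\mathrm{lk}_\Pi(\pi)$ is exactly the paper's $\dim C_c^{j+\dim\pi}(\St\pi)$) and then performing the identical double count over pairs $\sigma\le\pi\le\tau$, and both arrive at the same closed form $m^j(\St v)=\binom{n}{k-j}\binom{n-k+j-1}{j}$ for the tightness claim. The only cosmetic differences are your link/reduced-cohomology reformulation of the multiplicities and your use of the wedge-of-spheres homotopy type of $\Dnk{n'}{k'}$ where the paper computes the same Betti number via the Euler characteristic.
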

\begin{proof}
We prove the upper bound using Theorem~\ref{thm:multiplicities} and bounding dimensions of homology groups by dimensions of chain groups:
\begin{multline*}
    m^j(\St\sigma)
    = \sum_{\tau\in\St\sigma} m^j(\tau)
    = \sum_{\tau\in\St\sigma} \dim H_c^{j+\dim\tau}(\St\tau)
    \leq \sum_{\tau\in\St\sigma} \dim C_c^{j+\dim\tau}(\St\tau)
    \\ = \sum_{\tau\in\St\sigma} \# \left\{ \pi \,\middle|\, \tau<_j\pi \right\}
    = \sum_{\pi\in\St\sigma} \# \left\{ \tau \in \St\sigma \,\middle|\, \tau<_j\pi \right\}
    = \sum_{\pi\in\St\sigma} \#\binom{\pi\setminus\sigma}{j} \leq \#\St\sigma\cdot\binom{k}{j},
\end{multline*}
where $k=\dim\St\sigma-\dim\sigma\leq\dim\Sigma-\dim\sigma$.

Now we analyse $\stcplx^j(\sigma)$ in the $k$-skeleton of the $n$-simplex, $\Dnk{n}{k}$. We use the fact that $\St\sigma$ in $\Dnk{n}{k}$ is combinatorially the same as $\Dnk{n'}{k'}\cup\{\emptyset\}$, with $n'=n-\dim\sigma-1$ and $k'=k-\dim\sigma-1$, using the correspondence $\St\sigma\ni\tau \mapsto \tau\setminus\sigma$. This map induces an isomorphism between the cochain complexes \[
C_c^{\bullet}(\St\sigma) \cong \tilde{C}^{\bullet-\dim\sigma-1}\left(\Dnk{n'}{k'}\right),
\] which, using Theorem~\ref{thm:multiplicities}, implies
\begin{equation*}
    m^j(\sigma) = \dim H_c^{j+\dim\sigma}(\St\sigma) = \dim \tilde{H}^{j-1}\left(\Dnk{n'}{k'}\right).
\end{equation*}
The reduced cohomology $\tilde{H}^i\left(\Dnk{n'}{k'}\right)$ is trivial for all $i\neq k'$, and for $i=k'$, we compute the dimension from the Euler characteristic:
\begin{multline*}
    \dim\tilde{H}^{k'}\left(\Dnk{n'}{k'}\right) = 
    (-1)^{k'} \tilde{\chi} \left(\Dnk{n'}{k'}\right) = 
    (-1)^{k'} \left( 1 + \sum_{i=0}^{k'} \binom{n'+1}{i+1} (-1)^i \right) \\ =
    (-1)^{k'-1} \left( \sum_{i=0}^{k'+1} \binom{n'+1}{i} (-1)^i \right) =
    (-1)^{k'-1} \cdot (-1)^{k'+1} \binom{n'}{k'+1} =
    \binom{n'}{k'+1}.
\end{multline*}
Therefore,
\begin{align*}
   m^j(\sigma) = 
    \begin{cases}
        \binom{n-\dim\sigma-1}{k-\dim\sigma} &\text{ if $j = k'+1 = k-\dim\sigma$,} \\
        0 &\text{ otherwise.}
    \end{cases}
\end{align*}
Finally, we compute $m^j(\St v)$ for a vertex $v$:\[
    m^j(\St v)
    = \sum_{\sigma\in\St v} m^j(\sigma)
    = \sum_{\substack{\sigma\in\St v \\ \dim\sigma=k-j}} \binom{n-k+j-1}{j}
    = \binom{n}{k-j}\cdot\binom{n-k+j-1}{j}
\]

We rearrange this as follows
\begin{align*}
    m^j(\St v)
    &= \frac{n!}{(n-k+j)!\ (k-j)!}\cdot \frac{(n-k+j-1)!}{(n-k-1)!\ j!}
    \\&= \frac{n!}{(n-k)!\ k!}\cdot \frac{n-k}{n-k+j}\cdot  \frac{k!}{(k-j)!\ j!}
    = \binom{n}{k} \binom{k}{j} \frac{n-k}{n-k-1}.
\end{align*}
Now we can easily compare this with $\# \St v = \sum_{i=0}^k \binom{n}{i}$. When we fix $k$ and $j$, we get \[
    \lim_{n\rightarrow\infty} \stcplx^j(v) = \lim_{n\rightarrow\infty} \frac{m^j(\St v)}{\# \St v} = \binom{k}{j}.
\]
\end{proof}

\begin{corollary}\label{cor:complexity}
    For a fixed dimension $d$, the Algorithm~\ref{algo:injective_resolution_tail} computes the minimal injective resolution of the constant sheaf on a $d$-dimensional simplicial complex $\Sigma$ in time $\mathcal{O}(n\cdot s^3)$, where $n$ is the cardinality of $\Sigma$ (as an abstract simplicial complex), and $s$ is the cardinality of the largest star in $\Sigma$.
\end{corollary}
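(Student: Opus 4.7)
The plan is to chain together the output-sensitive bound given just before the corollary with the combinatorial bound of Proposition~\ref{prop:upper_bound_on_SC}. Recall the discussion preceding Definition~\ref{def:star_complexity} established a runtime of $\mathcal{O}(d\cdot n\cdot c^3)$ for the full iterative application of Algorithm~\ref{algo:injective_resolution_tail}, where
\[
c=\max_{j,\sigma}\; m^j(\St\sigma)
\]
bounds the number of generators appearing over any star during the resolution, and $d$ bounds its length via Corollary~\ref{cor:length_of_resolution}.

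First I would rewrite $m^j(\St\sigma)=\stcplx^j(\sigma)\cdot\#\St\sigma$ and invoke Proposition~\ref{prop:upper_bound_on_SC} to obtain $\stcplx^j(\sigma)\le\binom{k_\sigma}{j}$, where $k_\sigma=\dim\Sigma-\dim\sigma\le d$. Since $\#\St\sigma\le s$ by definition of $s$, this gives the uniform bound
\[
c\;\le\; s\cdot \max_{0\le j\le d}\binom{d}{j}\;\le\; s\cdot 2^d.
\]
With $d$ fixed the factor $2^d$ is a constant, so $c=\mathcal{O}(s)$.

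Substituting this into the $\mathcal{O}(d\cdot n\cdot c^3)$ runtime yields $\mathcal{O}(d\cdot n\cdot s^3\cdot 2^{3d})$, which is $\mathcal{O}(n\cdot s^3)$ once $d$ is treated as a constant, completing the proof. There is no real obstacle here; the only subtlety is to note that the $d$ factor from the length of the resolution and the $2^{3d}$ factor from the binomial bound are both absorbed into the hidden constant, which is precisely what the hypothesis of fixed dimension allows.
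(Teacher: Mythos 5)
Your argument is correct and is exactly the route the paper intends: the corollary follows by combining the output-sensitive bound $\mathcal{O}(d\cdot n\cdot c^3)$ from the complexity discussion with the bound $\stcplx^j(\sigma)\le\binom{k}{j}\le 2^d$ from Proposition~\ref{prop:upper_bound_on_SC}, absorbing the $d$ and $2^{3d}$ factors into the constant. No gaps.
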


\section{Right Derived Functors}
\label{sec:derived-functors}
As an application of our main results, we define, in terms of injective resolutions, two examples of right derived functors. 

\paragraph*{The right derived pushforward, $R^\bullet f_\ast$.} Let $f:\Sigma\rightarrow \Lambda $ be a continuous (relative to the Alexandrov topology) map of posets. Let $I^\bullet$ be an injective resolution of a sheaf $F$ on $\Sigma$. Define the integers $n^j_F(\pi)$ so that 
\[
I^j = \bigoplus_{\pi\in\Sigma}[\pi]^{n^j_F(\pi)}.
\]
We describe each chain map $\eta^j:I^j\rightarrow I^{j+1}$, as in Section \ref{sec:maps}, by a matrix with columns and rows indexed by the indecomposable summands of $I^j$, and $I^{j+1}$, respectively. 
Let $\eta^j(f^{-1}(\St\lambda))$ be the submatrix of $\eta^j$ consisting of rows and columns corresponding to the indecomposable summands $[\pi]$ with $\pi\in f^{-1}(\St\lambda)$, so that 
\[
\eta^j(f^{-1}(\St\lambda)):\bigoplus_{\pi\in f^{-1}(\St\lambda)}[\pi]^{n^j_F(\pi)}\rightarrow \bigoplus_{\pi\in f^{-1}(\St\lambda)}[\pi]^{n^{j+1}_F(\pi)}.
\] Note that if $\kappa\le\lambda$, then $f^{-1}(\St\lambda)\subset f^{-1}(\St\kappa) $, and the projection
\[\text{proj}:\bigoplus_{\pi\in f^{-1}(\St\kappa)}[\pi]^{n^j_F(\pi)}\rightarrow \bigoplus_{\pi\in f^{-1}(\St\lambda)}[\pi]^{n^j_F(\pi)}\]
induces linear maps
$\ker \eta^j(f^{-1}(\St\kappa))\rightarrow \ker \eta^j(f^{-1}(\St\lambda))$
and \\ 
$\im \eta^{j-1}(f^{-1}(\St\kappa))\rightarrow \im \eta^{j-1}(f^{-1}(\St\lambda))$
(because $\Hom([\pi],[\tau])=0$ if $\tau\not\le\pi$).

\begin{definition}
Define a sheaf $R^jf_\ast F$ on $\Lambda$ by
\[
R^jf_\ast F(\lambda) := \ker \eta^j(f^{-1}(\St\lambda)) / \im\eta^{j-1} (f^{-1}(\St\lambda)), 
\]
with linear maps $R^jf_\ast F(\kappa\le \lambda ) $ induced by the projections described above. 
\end{definition}

\paragraph*{The right derived pushforward with compact support, $R^\bullet f_!$.} Pushforwards with compact support are a critical structure in the machinery of derived categories of sheaves. We would, therefore, like to explicitly describe how to compute $R^\bullet f_! F$ for a given sheaf $F$ and continuous map $f$. However, the topological notion of `compactly supported' does not adapt to the setting of finite posets in a canonical or straightforward way. Subtle topological constraints must be placed on the maps $f$ in order for the discrete calculation to agree with the classical definitions. See \cite[\S 3.3]{Shepard1985} for one approach to establish such topological critiria. In order to keep our methods as transparent and accessible as possible, we will instead describe $R^\bullet f_!$ for a smaller family of functions, which satisfy more familiar topological constraints. We expect that this family of functions is large enough to handle most interesting applications. 

For the remainder of this section, let $\bar{f}:\Sigma\rightarrow\Lambda$ be a simplicial map between finite simplicial complexes. Let $i:U\hookrightarrow \Sigma$ be the inclusion of an open subset $U$ in $\Sigma$. Let $f:U\rightarrow \Lambda$ be the restriction of $\bar{f}$ to $U$. 

\begin{definition}
Given a sheaf $F$ on $U$, define the sheaf $i_!F$ on $\Sigma$ by 
\[
i_!F(\sigma):=\begin{cases}
&F(\sigma)\text{ if }\sigma\in U\\
&0\text{ else,}
\end{cases}
\]
with the linear maps $i_!F(\gamma\le\sigma)= F(\gamma\le\sigma)$ when $(\gamma\le\sigma)\in U$, and 0 else. Finally, define the sheaf $R^jf_! F$ on $\Lambda$ by
\[
R^jf_! F := R^j\bar{f}_\ast (i_! F). 
\]
\end{definition}
\paragraph*{Derived functors and persistent cohomology.} The sheaves $R^jf_\ast F$ and $R^jf_!F$ may be regarded as level-set multi-parameter persistence modules. With this perspective, we can easily compute, from a single injective resolution $I^\bullet$ of $F$, level-set persistence modules associated to any filtration function $f$. Below, we relate the persistence module $R^jf_\ast k_U$ to the singular cohomology of level-sets, and $R^jf_! k_U$ to the compactly supported singular cohomology of level-sets. 
\begin{proposition}\label{prop:persistent-homology}
As sheaves on $\Lambda$, 
\begin{align*}
    R^jf_\ast k_U  &\cong H^j(|f^{-1}(\St\blank)|,k)
\end{align*}
where $H^j(|f^{-1}(\St\blank)|,k)$ is the sheaf defined by associating the simplex $\lambda$ to the singular cohomology of the geometric realization of $f^{-1}(\St\lambda)$ (with linear maps induced by inclusion). 
Moreover, $R^j f_! k_U$ captures the compactly supported singular cohomology of the fibers of $f$:
\begin{align*}
    R^jf_! k_U (\lambda) &\cong H^j_c(|f^{-1}(\St\lambda)|,k).
\end{align*}
\end{proposition}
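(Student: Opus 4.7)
The plan is to reduce both claims to standard identifications in sheaf cohomology, after first verifying that the matrix-based definition of $R^\bullet f_\ast$ given in this section agrees with the classical derived functor of the pushforward.

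First, I would show that for any sheaf $F$ on $U$, $R^jf_\ast F(\lambda)\cong H^j\bigl(f^{-1}(\St\lambda),\, F\vert_{f^{-1}(\St\lambda)}\bigr)$, naturally in $\lambda$. The key observation is that for an indecomposable injective sheaf $[\pi]$ and any upset $V\subseteq U$, a direct computation of compatible families shows $\Gamma(V, [\pi]\vert_V)=k$ when $\pi\in V$ and $0$ otherwise; hence $f_\ast[\pi]\cong [f(\pi)]$ as sheaves on $\Lambda$. Consequently, applying $f_\ast$ term-wise to an injective resolution $I^\bullet$ of $F$ and evaluating at $\lambda$ produces precisely the complex $\bigoplus_{\pi\in f^{-1}(\St\lambda)}k^{n^j_F(\pi)}$ whose differentials are the submatrices $\eta^j(f^{-1}(\St\lambda))$ of the matrix presentation from Section~\ref{sec:maps}. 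The standard base-change argument (restriction to the open subposet $\St\lambda$ followed by taking sections) identifies the cohomology at $\lambda$ with the sheaf cohomology $H^j(f^{-1}(\St\lambda), F\vert_{f^{-1}(\St\lambda)})$.

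For the first isomorphism I would then specialize to $F=k_U$ and invoke the classical equivalence between Alexandrov sheaf cohomology of the constant sheaf on a finite poset and singular cohomology of its geometric realization (cf.\ \cite[\S~4.2]{Curry2014}), yielding $R^jf_\ast k_U(\lambda)\cong H^j(|f^{-1}(\St\lambda)|, k)$. Naturality in $\lambda$ follows because the transition maps on both sides are induced by the same open inclusion $f^{-1}(\St\kappa)\supseteq f^{-1}(\St\lambda)$ for $\kappa\leq\lambda$: on the left by the projections defining $R^jf_\ast$, on the right by functoriality of singular cohomology under restriction.

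For the compactly supported claim, I would start from the defining identity $R^jf_!k_U(\lambda)=R^j\bar{f}_\ast(i_!k_U)(\lambda)$ and apply the first step with $F=i_!k_U$ to obtain $H^j\bigl(\bar{f}^{-1}(\St\lambda),\, j_!k_{f^{-1}(\St\lambda)}\bigr)$, where $j:f^{-1}(\St\lambda)\hookrightarrow\bar{f}^{-1}(\St\lambda)$ is the open inclusion (noting that $(i_!k_U)\vert_{\bar{f}^{-1}(\St\lambda)}$ is exactly this extension by zero). Since $\bar{f}$ is a simplicial map between finite simplicial complexes, it is proper, and via Shepard's framework for fibred cellular maps \cite[\S~3.3]{Shepard1985} this sheaf cohomology is identified with the compactly supported singular cohomology $H^j_c(|f^{-1}(\St\lambda)|, k)$. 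The main obstacle will be justifying this last identification rigorously: the classical identity $H^\bullet(X, j_!F)\cong H^\bullet_c(U, F)$ requires suitable compactness hypotheses on $X$, and $|\bar{f}^{-1}(\St\lambda)|$ is typically a non-compact open subset of $|\Sigma|$, so care is needed to appeal to the combinatorial--topological dictionary (via Shepard's fibred cellular maps, or Curry's treatment) to ensure the combinatorial pushforward indeed captures the intended topological compactly supported cohomology.
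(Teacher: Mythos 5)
Your argument is correct and reaches the same conclusions, but the route to the first isomorphism is genuinely different from the paper's. The paper does not argue abstractly: it takes the specific non-inductive resolution of Section~\ref{sec:algo_non-inductive} built from the order complex, observes that the generators lying in $f^{-1}(\St\lambda)$ are exactly the chains contained in that upset, and concludes that the complex with differentials $\eta^\bullet(f^{-1}(\St\lambda))$ \emph{is} the simplicial cochain complex of $K(f^{-1}(\St\lambda))$, whose cohomology is the singular cohomology of $|f^{-1}(\St\lambda)|$. You instead prove $f_\ast[\pi]\cong[f(\pi)]$ (the section computation over upsets is correct), identify the matrix complex evaluated at $\lambda$ with $\Gamma(f^{-1}(\St\lambda),I^\bullet)$ for an \emph{arbitrary} injective resolution, and then invoke the classical comparison between Alexandrov sheaf cohomology of the constant sheaf and singular cohomology of the realization. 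Your route buys something the paper leaves implicit --- that the matrix definition of $R^jf_\ast$ is independent of the chosen resolution and agrees with the classical derived pushforward --- at the cost of importing the poset-versus-realization comparison theorem as a black box, whereas the paper gets the topological identification essentially for free from its own explicit construction. For the compactly supported statement both arguments are brief and both ultimately rest on the properness of $\bar f$: the obstacle you honestly flag (that $|\bar f^{-1}(\St\lambda)|$ is not compact, so $H^\bullet(X,j_!F)\cong H^\bullet_c(U,F)$ is not automatic) is precisely the point the paper discharges by citing the proper base change theorem (\cite[Chapter VII Theorem 1.4]{Iversen}, \cite[Proposition 2.6.7]{KashiwaraSchapira1994}) and Shepard's fibred cellular maps rather than by a direct argument, so your proposal is not missing anything that the paper's proof actually supplies.
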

\begin{proof}
Let $I^\bullet$ be the injective resolution described in Section~\ref{sec:algo_non-inductive}. Let $I^\bullet_{f^{-1}(\St\lambda)}$ be the chain complex of vector spaces consisting of only the linear combinations of generators for indecomposable sheaves $[\pi]\subset I^\bullet$ such that $\pi\in f^{-1}(\St\lambda)$ and chain maps $\eta^\bullet(f^{-1}(\St\lambda))$. Then $I^\bullet_{f^{-1}(\St\lambda)}$ is identical to the simplicial cochain complex of $K(f^{-1}(\St\lambda))$. The cohomology groups of this chain complex are isomorphic to the singular cohomology of the geomtric realization of $f^{-1}(\St\lambda)$: \[R^jf_\ast k_U:=H^j\left(I^\bullet_{f^{-1}(\St\lambda)}\right)\cong H^j(|f^{-1}(\St\lambda)|,k),\] and the linear maps $R^jf_\ast(\kappa\le\lambda)$ are the usual cohomology maps \[H^j(|K(f^{-1}(\St\kappa))|,k)\rightarrow H^j(|K(f^{-1}(\St\lambda))|,k)\] induced by inclusion (cf.\ \cite[Chapter II Proposition 5.11]{Iversen}). A similar argument proves the analogous result for $R^jf_!k_U$. We also note that because $\bar{f}$ is assumed to be a simplicial map between finite simplicial complexes, $\bar{f}$ is proper, and the result follows by applying the proper base change theorem for sheaves (see \cite[Chapter VII Theorem 1.4]{Iversen} or \cite[Proposition 2.6.7]{KashiwaraSchapira1994}).
\end{proof}

\section{Discussion}\label{sec:discussion}
An injective resolution represents a given sheaf with an exact sequence of injective sheaves. The homological properties of the given sheaf can then be deduced from its injective resolution (which has many more theoretically and practically desirable properties). The results of this paper address fundamental aspects of computing injective resolutions. First, we prove the existence and uniqueness of a minimal injective resolution, and provide several of its defining characteristics (Theorem \ref{thm:main-result} and Corollary \ref{cor:existence}). We give a topological interpretation of the multiplicities of indecomposable injective sheaves in the minimal injective resolution of the constant sheaf over a simplicial complex (Theorem \ref{thm:multiplicities}). We introduce two new methods for constructing injective resolutions. The first defines the $k$-th term of the resolution without referencing earlier terms (Section~\ref{sec:algo_non-inductive}). The second is an inductive algorithm which computes the minimal injective resolution of a given sheaf (Section~\ref{sec:algo_minimal}). Finally, we give asymptotically tight bounds on the complexity of computing the minimal injective resolution of the constant sheaf on a simplicial complex using Algorithm~\ref{algo:injective_resolution_tail} (Proposition \ref{prop:upper_bound_on_SC} and Corollary \ref{cor:complexity}).

There are many directions in which to extend this work, and several interesting questions which arise from studying derived categories of sheaves from the perspective of computational topology. 
To make full use of the derived category machinery in computational topology, it is necessary to develop algorithms for computing each of Grothendieck's six functors on derived categories: $f_\ast$, $f^\ast$, $f_!$, $f^!$, $\Hom$, and $\otimes$. To this end, it will be useful to extend the results of this paper to injective resolutions of complexes of sheaves: to each complex $F^\bullet$, compute a quasi-isomorphic complex of injective sheaves $I^\bullet$. We plan to pursue this in future work. 

Theorem~\ref{thm:multiplicities} also suggests an interesting connection between the minimal injective resolution, the $\bar{p}$-canonical stratifications of Goresky--MacPherson~\cite{GoreskyMacPherson}, and the cohomological stratification of Nanda~\cite{Nanda}. Briefly, the canonical $\bar{p}$-stratification is constructed by inductively identifying subsets of $\Sigma$ for which the dualizing complex $\omega^\bullet_\Sigma$ is cohomologically locally constant. Similarly, the cohomological stratification of \cite{Nanda} is constructed by inductively identifying subsets of $\Sigma$ for which the cosheaf $\sigma\mapsto H_c^j(\St\sigma,k)$ is locally constant. The (co)homology groups $H^{-j}(\omega^\bullet_\Sigma(\sigma))\cong H_j(\Sigma, \Sigma - \St\sigma)$ and $H_c^j(\St\sigma,k)$, are closely related, by Theorem \ref{thm:multiplicities}, to the multiplicities of indecomposable injective sheaves in the minimal injective resolution of the constant sheaf. However, to compute the canonical $\bar{p}$-stratification or cohomological stratification, it is necessary to investigate the linear maps induced by these (co)sheaves. It is not currently clear to us how we can recover the linear maps between cohomology groups: $H_c^j(\St\sigma,k)\rightarrow H_c^j(\St\tau,k)$, from the minimal injective resolution of $k_\Sigma$, without taking a barycentric subdivision of the simplicial complex. However, we conjecture that the invertibility of such linear maps can be deduced from the minimal injective resolution of $k_\Sigma$. If this is true, then it would be possible to efficiently compute canonical $\bar{p}$-stratifications and cohomological stratifications directly from the minimal injective resolution of $k_\Sigma$.



\end{document}